\font\de=cmssi12 \font\dediez=cmssi10
\newtheorem{teo}{\bf Theorem}[section]
\newtheorem{lema}[teo]{\bf Lemma}
\newtheorem{prop}[teo]{\bf Proposition}
\newtheorem{defi}[teo]{\bf Definition}
\newcommand{\s}{\sigma}
\newcommand{\eps}{\varepsilon}
\newcommand{\st}{{\rm st}}
\newcommand{\T}{\mathbb T}
\begin{document}
\title[]{Quasi-Anosov diffeomorphisms of 3-manifolds}
\author{T. Fisher}
\address{Brigham Young University, Dept. of Math., 292 TMCB, Provo, UT 84602.}
\email{tfisher@math.byu.edu}

\author{M. Rodriguez Hertz}
\address{IMERL\\ Facultad de Ingenier\'{\i}a\\ Julio Herrera y Reissig 565\\ 11300 Montevideo - Uruguay}
\email{jana@fing.edu.uy}

\date{\today}
\thanks{Partially supported by NSF Grant \#DMS0240049, Fondo Clemente Estable 9021 and PDT}
\subjclass[2000]{37D05; 37D20}
\keywords{Dynamical Systems, Hyperbolic Set, Robustly Expansive, Quasi-Anosov}
\begin{abstract}
 In 1969, Hirsch posed the following problem: given a
diffeomorphism $f:N\to N$, and a compact invariant hyperbolic set
$\Lambda$ of $f$, describe the topology of $\Lambda$ and the
dynamics of $f$ restricted to $\Lambda$. We solve the problem where
$\Lambda=M^3$ is a closed $3$-manifold: if $M^3$ is orientable, then
it is a connected sum of tori and handles; otherwise it is a
connected sum of tori and handles quotiented by involutions.\par The
dynamics of the diffeomorphisms restricted to $M^3$, called {\dediez
quasi-Anosov diffeomorphisms}, is also classified: it is the
connected sum of DA-diffeomorphisms, quotiented by
commuting involutions.
\end{abstract}
\maketitle \thispagestyle{empty}
\section{Introduction}
This paper deals with hyperbolic sub-dynamics. It is related to a
problem posed by M. Hirsch, around 1969: given a diffeomorphism
$f:N\to N$, and a compact invariant hyperbolic set $\Lambda$ of $f$,
describe the topology of $\Lambda$ and the dynamics of $f$
restricted to $\Lambda$. Hirsch asked, in particular, whether the
fact that $\Lambda$ were a manifold $M$ would imply that the
restriction of $f$ to $M$ is an Anosov diffeomorphism \cite{hirsch}.
However, in 1976, Franks and Robinson gave an example of a
non-Anosov hyperbolic sub-dynamics in the connected sum of two
${\mathbb T}^3$ \cite{frob} (see below). There are also examples of
hyperbolic sub-dynamics in non-orientable 3-manifolds, for instance,
the example of Zhuzhoma and Medvedev \cite{medvedevzhuzhoma}. We
show here that all examples of 3-manifolds that are hyperbolic
invariant sets are, in fact, finite connected sums of the examples
above and handles $S^2\times S^1$ (see definitions in
\S\ref{section.qad} and \S\ref{section thm1.1})
\begin{teo}\label{thm.kneser.decomposition} Let
$f:N\to N$ be a diffeomorphism, and let $M\subset N$ be a hyperbolic
invariant set for $f$ such that $M$ is a closed orientable
3-manifold. Then the Kneser-Milnor prime decomposition of $M$ is
$$M=T_1\#\dots\# T_k\# H_1\#\dots \# H_r$$
the connected sum of $k\geq 1$ tori $T_i=\T^3$ and $r\geq 0$ handles
$H_j=S^2\times S^1$. In case $M$ is non-orientable, then $M$
decomposes as
$$M=\tilde{T_1}\#\dots\#\tilde{T_k}\# H_1\#\dots\# H_r$$
the connected sum of $k\geq 1$ tori quotiented by involutions
$\tilde{T_i}=\T^3|\theta_i$ and $r$ handles $H_j=S^2\times S^1$.
\end{teo}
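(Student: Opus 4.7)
My plan is to classify the dynamical pieces of $f|_M$ via spectral decomposition and then reassemble them topologically. Write $\Omega(f|_M)=\Omega_1\cup\cdots\cup\Omega_p$ as a disjoint union of basic sets. Because $M$ itself is a compact hyperbolic set that is a $3$-manifold, every point of $M$ belongs to $W^s(\Omega_i)\cap W^u(\Omega_j)$ for some pair $i,j$, and at least one basic set is an attractor and one a repeller. A proper attractor cannot be zero-dimensional without forcing an isolated component of $M$, and an attractor equal to all of $M$ forces $f|_M$ to be Anosov, which gives $M\cong\T^3$ and already fits the stated decomposition with $k=1$, $r=0$. Thus I may assume every proper attractor is a two-dimensional expanding (Williams) attractor, and symmetrically for repellers.

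Next I would invoke the topological classification of codimension-one expanding attractors of $3$-manifold diffeomorphisms: each such attractor is topologically conjugate to a DA (derived-from-Anosov) attractor and admits a trapping neighborhood homeomorphic to $\T^3$ minus an open $3$-ball in the orientable case, or to $(\T^3|\theta)$ minus an open $3$-ball in the non-orientable case, for a suitable involution $\theta$; the dual statement holds for expanding repellers. The wandering region $M\setminus\Omega(f|_M)$ is swept out by transverse codimension-one pieces of $W^s$ and $W^u$. A Poincar\'e--Bendixson style analysis of these two invariant laminations, combined with the fact that $M$ itself is compact and hyperbolic, should force the basin boundary between any two adjacent basic pieces to be a disjoint union of embedded $2$-spheres, and each ``connecting'' component of the wandering region between two consecutive such spheres to be homeomorphic to $S^2\times[0,1]$.

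Cutting $M$ along these spheres and capping with $3$-balls then produces one $\T^3$ (resp.\ $\T^3|\theta_i$) summand per DA basic piece and one $S^2\times S^1$ summand per spacer handle; reassembling via the uniqueness in Kneser--Milnor yields the stated formula with $k\geq 1$. I expect the main obstacle to lie in the sphericity claim for basin boundaries: ruling out accumulation of the codimension-one laminations on higher-genus surfaces requires using essentially that $M$ (and not just $\Omega(f|_M)$) is both a manifold and a hyperbolic invariant set for the ambient $f$. The non-orientable case I would then reduce to the orientable one by passing to the orientable double cover $\widehat M\to M$, applying the orientable decomposition to the lifted quasi-Anosov diffeomorphism $\widehat f$, and descending the prime decomposition equivariantly under the deck involution, which acts on each torus summand as the sought $\theta_i$ and preserves the handle summands.
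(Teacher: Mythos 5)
Your outline follows the paper's strategy at the top level (Ma\~n\'e's characterization of $f|_M$ as quasi-Anosov, classification of the attractors as codimension-one expanding attractors conjugate to DA models, cutting along spheres, and invoking Kneser--Milnor uniqueness), but it has two genuine gaps. The first is that you assume the non-wandering set consists only of attractors and repellers. For a quasi-Anosov diffeomorphism of a $3$-manifold the spectral decomposition can also contain saddle periodic orbits that are neither attractors nor repellers --- the paper constructs an explicit such example on $T_1\# T_2\# T_3$ in Section 6 --- and their stable and unstable manifolds sit inside the closures of the basins. Consequently $M$ is not simply a union of attractor basins, repeller basins and product spacers: the compactification of $W^s(\Lambda_0)$ must be enlarged to include $W^s(\mathcal{P})$ for the set $\mathcal{P}$ of such saddles, together with extra ideal points, and one must re-prove that the resulting space is still homeomorphic to $\T^3$ (this occupies the second half of the paper's proof, via the arcs $a_{pq}$ and an extension of Theorem 7.1 of Grines--Zhuzhoma). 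Your argument as written silently excludes this case.

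The second gap is the one you yourself flag: the claim that basin boundaries are unions of embedded $2$-spheres and that the wandering region decomposes into $S^2\times[0,1]$ pieces. This is not something a ``Poincar\'e--Bendixson style analysis'' of the two laminations delivers; in the paper it rests on Plykin's theorem that $W^s(\Lambda)$ has a finite point-compactification with the homotopy type of $\T^3$, together with the Grines--Zhuzhoma characteristic spheres $S_{p_1,p_2}=D_{p_1}\cup D_{p_2}\cup C_{p_1,p_2}$ built from associated boundary periodic points of the attractor; these are the embedded spheres along which one cuts, and a fundamental-domain argument identifies the region between a sphere and its image. Relatedly, your statement that each attractor has a trapping neighborhood homeomorphic to $\T^3$ minus one open ball is wrong: the basin is a torus minus $k$ points with $k\geq 1$ arbitrary, and it is exactly the case $k\geq 2$ that produces the $S^2\times S^1$ summands (gluing two copies of $\T^3$ minus $k$ balls along all $k$ boundary spheres yields $T_1\# T_2\# H_1\#\dots\# H_{k-1}$). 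With only one ball per attractor your construction could never produce a handle, so the accounting of the $H_j$ in your final step does not go through as described. Your reduction of the non-orientable case to the orientable double cover is, by contrast, consistent with what the paper does via Plykin's non-orientable theorem.
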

In 1976, Ma\~n\'e obtained the following characterization
\cite{manhe} (see also Theorem \ref{teo.manhe}): $g:M\to M$ is the
restriction of another diffeomorphism to a hyperbolic set $M$ that
is a closed manifold, if and only if $g$ is {\de quasi-Anosov}; that
is, if it satisfies Axiom A and all intersections of stable and
unstable manifolds are quasi-transversal, i.e.:
\begin{equation}\label{eq.quasi-transversal} T_xW^s(x)\cap
T_xW^u(x)=\{0\}\qquad\forall x\in M
\end{equation}\par
 The Franks-Robinson's
example of a non-Anosov quasi-Anosov diffeomorphism is essentially
as follows: they consider a hyperbolic linear automorphism of a
torus $T_1$ with only one fixed point, and its inverse in another
torus $T_2$. They produce appropriate deformations on each torus
(DA-diffeomorphisms) around their respective fixed points. Then they
cut suitable neighborhoods containing these fixed points, and
carefully glue together along their boundary so that the stable and
unstable foliations intersect quasi-transversally. This is a
quasi-Anosov diffeomorphism in the connected sum of $T_1$ and $T_2$,
and hence $T_1\# T_2$ is a compact invariant hyperbolic set of some
diffeomorphism. The non-orientable example by Medvedev and Zhuzhoma
\cite{medvedevzhuzhoma} is similar to Franks and Robinson's, but
they perform a quotient of each $T_i$ by an involution before gluing
them together.
\par%
The second part of this work, a classification of the dynamics of
quasi-Anosov diffeomorphisms of $3$-manifolds, shows that all
examples are, in fact, connected sums of the basic examples above:
\begin{teo}\label{teo.quasi.Anosov.dim3} Let $g:M\to M$ be a quasi-Anosov
diffeomorphism of a closed 3-manifold $M$. Then
\begin{enumerate}
\item \label{item1} The non-wandering set $\Omega(g)$ of $g$ consists
of a finite number of codimension-one expanding attractors,
codimension-one shrinking repellers and hyperbolic periodic points.
\item\label{item2} For each attractor $\Lambda$ in $\Omega(g)$, there exist a hyperbolic
toral automorphism $A$ with stable index one, a finite set $Q$ of
$A$-periodic points, and a linear involution $\theta$ of ${\mathbb
T}^3$ fixing $Q$ such that the restriction of $g$ to its basin of
attraction $W^s(\Lambda)$ is topologically conjugate to a
DA-diffeomorphism $f^A_Q$ on the punctured torus ${\mathbb T}^3-Q$
quotiented by $\theta$. In case $M$ is an orientable manifold,
$\theta$ is the identity map. An analogous result holds for the
repellers of $\Omega(g)$.
\end{enumerate}
\end{teo}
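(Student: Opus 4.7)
The plan is to combine Smale's spectral decomposition with the quasi-transversality condition to analyze the basic sets. Since $g$ is quasi-Anosov it satisfies Axiom A (see Theorem \ref{teo.manhe}), so the non-wandering set decomposes as a finite disjoint union $\Omega(g)=\Omega_1\cup\cdots\cup\Omega_N$ of basic sets, with a partial order given by heteroclinic intersections. Part \ref{item1} reduces to proving that each non-trivial (non-periodic) basic set is an attractor or a repeller and that such basic sets are codimension one. In dimension three, a non-trivial $\Omega_i$ has stable index $s_i\in\{1,2\}$; if it were neither an attractor nor a repeller, both $W^u(\Omega_i)\setminus\Omega_i$ and $W^s(\Omega_i)\setminus\Omega_i$ would be nonempty, producing heteroclinic points with other basic sets. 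At such a heteroclinic point quasi-transversality forces
\[
\dim W^u(\Omega_i)+\dim W^s(\Omega_j)\le 3,
\]
and iterating this inequality along chains in the partial order leaves no room for non-trivial saddles. A non-trivial attractor of stable index one then has unstable dimension $2$ and coincides with the closure of a single unstable leaf, hence is codimension one; symmetric reasoning handles repellers.

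For part \ref{item2}, given a codimension-one expanding attractor $\Lambda$ of stable index one, I would first identify $g|_\Lambda$ with a DA model and then extend the identification to the whole basin. For the first step I would invoke Plykin's and Williams' theory of codimension-one expanding attractors in conjunction with a Franks-type argument: $\Lambda$ is presented as the attractor of a DA surgery performed on a finite set $Q$ of periodic points of a hyperbolic toral automorphism $A$ of $\T^3$ of stable index one. In the non-orientable case I would pass to an orientable double cover of a neighborhood of $\Lambda$, carry out the identification there, and recover a linear involution $\theta$ of $\T^3$ fixing $Q$ whose quotient restores the original dynamics; taking $\theta$ to be the identity in the orientable case unifies the two statements.

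The main obstacle is extending the conjugacy on $\Lambda$ to a conjugacy between $g|_{W^s(\Lambda)}$ and $f^A_Q$ on $(\T^3\setminus Q)/\theta$. Both sides carry a continuous one-dimensional stable foliation whose leaves through points of the attractor are precisely the stable manifolds. The strategy is to propagate the conjugacy from $\Lambda$ outward along these stable leaves, using the contracting dynamics and holonomy to match leaves bijectively. The delicate points I expect to face are controlling how stable leaves approach the repelling periodic orbits corresponding to $Q$ (which become the punctures in $\T^3\setminus Q$) and checking that the covering involution in the non-orientable case preserves the leaf structure so that the extension descends to the quotient.
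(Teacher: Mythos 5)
Your reduction of part (\ref{item1}) to a dimension count does not close. Quasi-transversality at a heteroclinic point of $W^u(\Omega_i)$ and $W^s(\Omega_j)$ gives $\dim W^u(\Omega_i)+\dim W^s(\Omega_j)\le 3$, which in dimension three only yields the monotonicity $\Omega_i\succeq\Omega_j\Rightarrow \st(\Omega_i)\ge\st(\Omega_j)$. This does \emph{not} ``leave no room for non-trivial saddles'': a chain of basic sets all of stable index one is perfectly consistent with the inequality, so nothing in the counting argument excludes an infinite saddle-type basic set $\Lambda$ of stable index one sitting above a codimension-one expanding attractor $\Lambda_0$ (nor, symmetrically, a non-trivial repeller of stable index one, i.e.\ a solenoid-like repeller). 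Ruling these out is the real content of part (\ref{item1}), and in the paper it is Proposition \ref{p.basic}, whose proof is genuinely geometric rather than combinatorial: one takes associated boundary periodic points $p_1,p_2$ of $\Lambda_0$, builds the connecting cylinder $C_{p_1,p_2}$ and the region $D_{p_1,p_2}$ fibered by the arcs $[x,\varphi_{p_1,p_2}(x)]^s_{\emptyset}$, and shows (following Grines--Zhuzhoma, with strong transversality replaced by the transversality forced here by quasi-transversality between codimension-one sets) that $W^u(z)\cap C_{p_1,p_2}$ is a single circle isotopic to $S_{p_1}$, so that a full fundamental domain of $W^u(z)$ lies in $D_{p_1,p_2}\subset W^s(\Lambda_0)$. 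Invariance then gives $W^u(z)-z\subset W^s(\Lambda_0)$, hence $(W^u(z)-z)\cap\Lambda=\emptyset$, and density of $W^u(z)\cap W^s(z)$ in a component of the basic set forces $\Lambda$ to be the orbit of $z$. The non-orientable case is handled by passing to an orientable double cover. Without this step (or an equivalent), your proof of part (\ref{item1}) is incomplete.

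For part (\ref{item2}) your outline is workable but reinvents what the paper simply quotes: Plykin's classification (Theorem \ref{teo.plykin}) already provides the topological conjugacy of $g|_{W^s(\Lambda)}$ with $f^A_Q|_{\mathbb{T}^3-Q}$ (quotiented by an involution in the non-orientable case), so there is no need to propagate a conjugacy outward along stable leaves yourself; you only need to check Plykin's hypotheses (connectedness and a dense unstable manifold, which follow from the spectral decomposition, and orientability of the attractor when $M$ is orientable, which is Theorem \ref{thm.medvedev.zhuzhoma}). The substantive defect of the proposal is therefore concentrated in part (\ref{item1}).
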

Item (\ref{item2}) above is actually a consequence of item
(\ref{item1}), as it was shown by Plykin in \cite{plykin1,plykin2},
see also \cite{grines1977} and \cite{GZ1979}. A statement of the
result can be found in Theorem \ref{teo.plykin} in this work. The proof
of Theorem \ref{teo.quasi.Anosov.dim3} is in \S\ref{section.thm1.2}.
Theorem \ref{teo.quasi.Anosov.dim3}, in fact, implies Theorem
\ref{thm.kneser.decomposition}. This is proved in \S \ref{section
thm1.1}.
\par
Let us see how a handle $S^2\times S^1$ could appear in the prime
decomposition of $M$: Consider a linear automorphism of a torus
$T_1$, and its inverse in a torus $T_2$, as in Franks-Robinson's
example. Then, instead of exploding a fixed point, one explodes and
cuts around an orbit of period 2 in $T_1$ and in $T_2$. The rest of
the construction is very similar, gluing carefully as in that
example to obtain a quasi-Anosov dynamics. This gives the connected
sum of two tori and a handle. Explanation and details can be found in \S\ref{section thm1.1}.\par%
Let us also mention that in a previous work \cite{rhuv} it was shown
there exist a codimension-one expanding attractor and a
codimension-one shrinking repeller if $g$ is a quasi-Anosov
diffeomorphism of a $3$-manifold that is not Anosov. The fact that
only ${\mathbb T}^3$ can be an invariant subset of any known Anosov
system was already shown by A. Zeghib \cite{zeghib}. In that case,
the dynamics is Anosov. See also
\cite{franks} and \cite{manhe2}.\par%
This work is also related to a work by Grines and Zhuzhoma
\cite{GZ}. There they prove that if an $n$-manifold supports a
structurally stable diffeomorphism with a codimension-one expanding
attractor, then it is homotopy equivalent to $\T^n$, and
homeomorphic to $\T^n$ if $n\ne4$. In a certain sense, the results
deal with complementary extreme situations in the Axiom A world:
Grines-Zhuzhoma result deals with structurally stable
diffeomorphisms, which are Axiom A satisfying the strong
transversality condition. This means that all $x,y$ in the
non-wandering set satisfy at their points $z$ of intersection: $T_z
W^s(x)\pitchfork T_z W^u(y)$. In particular,
$$\dim E^s_x+\dim E^u_y \geq n$$
In our case, we deal with quasi-Anosov diffeomorphisms, which are
Axiom A satisfying equality (\ref{eq.quasi-transversal}). In
particular, for $x,y,z$ as above, we have $T_zW^s(x)\cap
T_zW^u(y)=\{0\}$, so:
$$\dim E^s_x+\dim E^u_y \leq n$$
In the intersection of both situations are, naturally, the Anosov
diffeomorphisms.\par
 Observe that it makes sense to get a
classification of the dynamical behavior of quasi-Anosov on its
non-wandering set, since quasi-Anosov are $\Omega$-stable
\cite{manhe} (see also \S \ref{section.qad}). They form an open set,
due to quasi-transversality condition (\ref{eq.quasi-transversal}).
Moreover, they are the $C^1$-interior of expansive diffeomorphisms,
that is, they are robustly expansive \cite{manhe3}. However,
3-dimensional quasi-Anosov diffeomorphisms of $M\ne \T^3$ are never
structurally stable, so they are approximated by other quasi-Anosov
diffeomorphisms with different dynamical behavior, but similar
asymptotic behavior (Proposition \ref{approximation.qad}).\par
Finally, in Section \ref{section.partially.hyperbolic} we study
quasi-Anosov diffeomorphisms in the presence of partial
hyperbolicity (see definitions in
\S\ref{section.partially.hyperbolic}). We obtain the following
result under mild assumptions on dynamical coherence:
\begin{teo}\label{ph.qad} If $f:M^3\to M^3$ is a quasi-Anosov
diffeomorphism that is partially hyperbolic, and either $E^{cs}$ or
$E^{cu}$ integrate to a foliation, then $f$ is Anosov.
\end{teo}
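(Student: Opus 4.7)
The plan is to argue by contradiction. Assume $f\colon M^3\to M^3$ is quasi-Anosov, partially hyperbolic with splitting $TM=E^s\oplus E^c\oplus E^u$, and that $E^{cs}$ integrates to an $f$-invariant foliation $\mathcal{F}^{cs}$, yet $f$ is not Anosov. The alternative case in which $E^{cu}$ integrates reduces to the present one by passing to $f^{-1}$, which is again quasi-Anosov and partially hyperbolic, and whose center-stable bundle coincides with the original $E^{cu}$. Combining the non-Anosov hypothesis with the earlier result of \cite{rhuv} recalled in the introduction and with Theorem~\ref{teo.quasi.Anosov.dim3}, one obtains both a codimension-one expanding attractor $\Lambda$ and a codimension-one shrinking repeller $R$ inside $\Omega(f)$.

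The next step is to identify how the hyperbolic splittings on the basic sets sit inside the partially hyperbolic one. Since $E^s\oplus E^c\oplus E^u$ is a dominated splitting with one-dimensional factors, it is the unique dominated splitting of its type, so every hyperbolic splitting on an $f$-invariant subset must coarsen it. Denoting the hyperbolic splitting by $\tilde E^s\oplus \tilde E^u$, on $\Lambda$ (of hyperbolic stable index one) this forces $\tilde E^s_\Lambda=E^s$ and $\tilde E^u_\Lambda=E^c\oplus E^u$; in particular $E^c$ is uniformly expanded on $\Lambda$. Dually, on $R$ one has $\tilde E^s_R=E^s\oplus E^c=E^{cs}$ and $\tilde E^u_R=E^u$, so $E^c$ is uniformly contracted on $R$.

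Now I would exploit the integrability of $E^{cs}$. For each $x\in R$ the leaf $\mathcal{F}^{cs}(x)$ is a 2-dimensional submanifold tangent to $E^{cs}_x=\tilde E^s(x)$, hence coincides locally with the hyperbolic strong stable manifold $W^s(x)$; and because $R$ is a basic set whose topological dimension equals its hyperbolic stable index, that is, a repeller, its stable manifolds are contained in it, so $W^s(x)\subset R$. Therefore every leaf of $\mathcal{F}^{cs}$ meeting $R$ lies entirely inside $R$, and the 2-dimensional stable lamination of the repeller is realized as a sub-family of leaves of $\mathcal{F}^{cs}$. In particular, the foliation $\mathcal{F}^{cs}$ must fill in the Cantor-like gaps of $R$ along the $E^u$-transversal by additional 2-dimensional leaves that vary continuously with the transversal parameter.

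The main obstacle is to convert this configuration into an outright contradiction. The strategy I would pursue is to apply Theorem~\ref{teo.quasi.Anosov.dim3}(\ref{item2}) to $f^{-1}$ in order to identify a neighborhood of $R$ with a DA-model $f^A_Q$ on a punctured 3-torus modulo an involution; this transports $\mathcal{F}^{cs}$ to an $f^A_Q$-invariant topological 2-foliation extending the linear center-stable foliation of the underlying hyperbolic automorphism $A$ past the punctures $Q$ and through the explosion region. The expected contradiction is that no such extension can exist while remaining $f$-invariant and tangent to $E^{cs}$: the orbits of points in the gaps of $R$ accumulate in forward time on the attractor $\Lambda$, where $E^c$ has switched from contracting to expanding, so a continuous $f$-invariant 2-plane field tangent to $E^s\oplus E^c$ cannot simultaneously realize the contracting 2-dimensional stable lamination on $R$ and the mixed (stable $+$ weak unstable) tangency at $\Lambda$ compatible with quasi-transversality. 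Making this last incompatibility rigorous, presumably via a careful pullback argument in the DA-model that uses both the switch in character of $E^c$ and the quasi-transversality relation (\ref{eq.quasi-transversal}), is the technical core of the proof.
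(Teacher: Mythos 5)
Your setup is fine as far as it goes (the reduction to the $E^{cs}$ case via $f^{-1}$, the identification $\tilde E^s_\Lambda=E^s$, $\tilde E^s_R=E^{cs}$ by uniqueness of dominated splittings, and the observation that leaves of $\mathcal F^{cs}$ through points of $R$ coincide with the stable manifolds of the repeller and hence lie in $R$), but the proof is not complete: the entire contradiction is deferred to a ``careful pullback argument in the DA-model'' that you do not carry out, and you acknowledge this is the technical core. Worse, the obstruction you gesture at is not an obstruction as stated. You claim that ``a continuous $f$-invariant 2-plane field tangent to $E^s\oplus E^c$ cannot simultaneously realize the contracting stable lamination on $R$ and the mixed tangency at $\Lambda$'' --- but the bundle $E^{cs}$ itself is exactly such a continuous invariant plane field, and it exists by the partial hyperbolicity hypothesis alone. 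So no contradiction can come from the plane field; it must come from \emph{integrability}, i.e.\ from the global structure of the foliation, and your sketch never identifies what that structural obstruction is.

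The paper's proof locates it precisely, by a topological route you do not touch. Since $f$ is not Anosov, Theorem \ref{thm.kneser.decomposition} forces $M$ to be a nontrivial connected sum, hence not irreducible; Roussarie's theorem then produces a compact leaf of the codimension-one foliation, which must be a 2-torus because the strong (un)stable foliation subfoliates it without singularities or closed leaves. Haefliger's theorem (the union of compact leaves is compact) upgrades this to a compact toral leaf $T_0$ through a periodic point, which is then invariant under some $f^k$ and, being normally hyperbolic in the transverse direction, is an attractor (or repeller) that is a 2-dimensional submanifold --- contradicting Theorem \ref{teo.quasi.Anosov.dim3}, which says every attractor/repeller is a codimension-one expanding/shrinking one, locally a Cantor set times a plane rather than a submanifold. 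If you want to salvage your approach, the missing idea is exactly this compact-leaf argument; without it (or a genuine substitute), the proposal does not prove the theorem.
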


\setcounter{section}{1}
{\it Acknowledgments.} We want to thank the referees for valuable
comments, and Ra\'ul Ures for suggestions. The second author is
grateful to the Department of Mathematics of the University of
Toronto, and specially to Mike Shub, for kind hospitality.

\section{Basic definitions}
%
Let us recall some basic definitions and facts: Given a
diffeomorphism $f:N\to N$, a compact invariant set $\Lambda$ is a
{\de hyperbolic set} for $f$ if there is a $Tf$-invariant splitting
of $TN$ on $\Lambda$: $$T_x N=E^s_x\oplus E^u_x\qquad\forall
x\in\Lambda$$ such that all unit vectors $v^\sigma \in
E^\sigma_\Lambda$, with $\sigma=s,u$ satisfy
$$|Tf(x)v^s|<1< |Tf(x)v^u|$$ for some suitable Riemannian metric
$|.|$. The {\de non-wandering set} of a diffeomorphism $g:M\to M$ is
denoted by $\Omega (g)$ and consists of the points $x\in M$, such
that for each neighborhood $U$ of $x$, the family
$\{g^n(U)\}_{n\in{\mathbb Z}}$ is not pairwise disjoint. The
diffeomorphism $g:M\to M$ satisfies {\de Axiom A} if $\Omega (g)$ is
a hyperbolic set for $g$ and periodic points are dense in
$\Omega(g)$. The {\de stable manifold} of a point $x$ is the set
$$W^s(x)=\{y\in M: d(f^n(x),f^n(y))\to 0\quad \mbox{if}\quad n\to
\infty\}$$ where $d(.,.)$ is the induced metric; the {\de unstable
manifold} $W^u(x)$ is defined analogously for $n\to -\infty$. If $g$
satisfies Axiom A, then $W^s(x)$ and $W^u(x)$ are immersed manifolds
for each $x\in M$ (see for instance \cite{shub}). Also, if
$d_\sigma$ is the intrinsic metric of the invariant manifold
$W^\sigma(x)$, for $\sigma=s,u$, one has constants $C,\eps>0$ and
$0<\lambda<1$ such that, for instance, if $y\in W^s(x)$, and $d_s(x,y)\leq\eps$  for some small $\eps>0$ then %
\begin{equation}\label{hyperbolic.metric}
d_s(f^n(x),f^n(y))\leq C\lambda^n d_s(x,y)\qquad \forall n\geq0
\end{equation}
an analogous bound holds for the unstable manifold. \par
Due to the Spectral Decomposition Theorem of Smale \cite{smale}, if
$g$ is Axiom A, then $\Omega(g)$ can be decomposed into  disjoint
compact invariant sets, called {\de basic sets}:
$$\Omega(g)=\Lambda_1\cup\dots\cup\Lambda_r,$$
each $\Lambda_i$ contains a dense orbit. Furthermore, each
$\Lambda_i$ can be decomposed into disjoint compact sets
$\Lambda_i=\Lambda_{i,1}\cup\dots\cup\Lambda_{i,k}$ such that there
exists an $n\in\mathbb{N}$ where each $\Lambda_{i,j}$ is invariant
and topologically mixing for $g^n$.  A set $X$ is
{\de topologically mixing} for a diffeomorphism $f$ if for each pair of
nonempty open sets $U$ and $V$ of $X$, there is $K>0$ such that
$$f^{k}(U)\cap V\ne
\emptyset\qquad\forall k\geq K.$$\par%
Note that $\dim E^s_x$ is constant for
$x$ varying on a basic set $\Lambda$,
 we shall call this amount the {\de stable index} of $\Lambda$,
 and will denote it by $\st(\Lambda)$. \par
For any set $\Lambda\subset M$, let us denote by $W^\sigma(\Lambda)$
the set $\bigcup_{x\in \Lambda}W^\s(x)$, where $\sigma=s,u$. We
define the
following (reflexive) relation among basic sets:%
$$\Lambda_1\to\Lambda_2\qquad\iff\qquad W^u(\Lambda_1)\cap
W^s(\Lambda_2)\ne\emptyset$$%
The relation $\to$ naturally extends to a transitive relation
$\succeq$: $$\Lambda_i\succeq\Lambda_j\quad\iff\quad
\Lambda_i\to\Lambda_{k_1}\to\dots\to\Lambda_{k_r}\to\Lambda_j$$
where $\Lambda_{k_1},\dots,\Lambda_{k_r}$ is a finite sequence of
basic sets. The diffeomorphism satisfies the {\de no-cycles
condition} if $\succeq$ is anti-symmetric: $$\Lambda_1\succeq
\Lambda_2\quad\mbox{and}\quad \Lambda_2\succeq \Lambda_1\quad
\implies \quad \Lambda_1=\Lambda_2$$ In this case $\succeq$
defines a partial order among basic sets.\par%
 We shall call $\Lambda$ an {\de attractor} if $\Lambda$ is a basic set such that $W^u(\Lambda)=\Lambda$.
 Note that this implies that there exists a neighborhood $U$ of $\Lambda$ such that $\Lambda=\bigcap_{n\in\mathbb{N}}f^n(U)$.
 Similarly, we shall call $\Lambda$ a {\de repeller} if $\Lambda$ is a basic set such that $W^s(\Lambda)=\Lambda$. If $g$
is Axiom A and satisfies the no-cycles
condition, then hyperbolic attractors and repellers are, respectively, the minimal and maximal elements of $\succeq$.\par%
A hyperbolic attractor $\Lambda$ is a {\de codimension-one expanding
attractor} if all $x\in \Lambda$ satisfy $\dim W^u(x)=\dim M-1$.
Codimension-one shrinking repellers are defined analogously.\par
Note that an attractor can have topological dimension $\dim M-1$,
and still be not expanding. See the survey \cite{GZ2006} on
expanding attractors for a discussion on this topic.

\section{Quasi-Anosov diffeomorphisms}\label{section.qad}
Let $f:N\to N$ be a diffeomorphism of a Riemannian manifold.
\begin{defi}\label{def.quasitransversal}
The sets $W^s(x)$ and $W^u(x)$ have a point of {\de
quasi-transversal} intersection at $x$ if
$$T_xW^s(x)\cap T_xW^u(x)=\{0\} $$ (see figure \ref{fig quasi
transv})
\end{defi}
\begin{figure}[h]
\psfrag{x}{$y$}\psfrag{y}{$z$}\psfrag{z}{$x$}\psfrag{wsy}{$W^s(x)$}
\psfrag{wsx}{$W^s(y)$}\psfrag{wux}{$W^u(x)$}\psfrag{wuy}{$W^u(z)$}
\includegraphics[height=5cm]{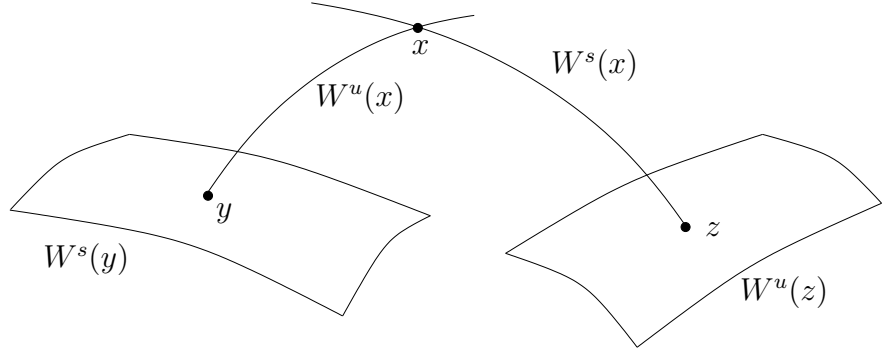}
\caption{\label{fig  quasi transv} Quasi-transversal intersection at
$x$}
\end{figure}
At a point of quasi-transversal intersection $x$, all vectors in
$E_x^s$ form a positive angle with vectors in $E^u_x$. But this does
not necessarily imply transversality, as can be seen in
Figure~\ref{fig quasi transv}.\par %
Let us note the difference between this definition and the strong
transversality condition. There, transversality is required at the
intersection points of $W^u(x)$ and $W^s(y)$, but this can be
attained without quasi-transversality, for instance, if we had two
planes intersecting at a curve in a 3-dimensional setting.

Observe that a structurally stable quasi-Anosov diffeomorphism is
Anosov (see \cite{manhe} and references therein). On the other hand,
quasi-Anosov diffeomorphisms satisfy the no-cycles condition (see
below), and hence they are $\Omega$-stable. Also, quasi-Anosov are a
$C^1$-open set od diffeomorphisms\cite{manhe3}. 
\begin{prop}\label{approximation.qad}
A quasi-Anosov diffeomorphism $f$ that is not Anosov is approximated
by $\Omega$-conjugate quasi-Anosov diffeomorphisms that are not
topologically conjugate to $f$.
\end{prop}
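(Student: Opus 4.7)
The plan is to combine three facts already noted in the paper: the quasi-Anosov class is $C^1$-open \cite{manhe3}; quasi-Anosov diffeomorphisms satisfy the no-cycles condition, hence are $\Omega$-stable \cite{manhe}; and a structurally stable quasi-Anosov diffeomorphism is Anosov. The strategy is to use the first two ingredients to produce a $C^1$-neighborhood of $f$ whose elements are automatically quasi-Anosov \emph{and} $\Omega$-conjugate to $f$, and then to exploit the third, together with the hypothesis that $f$ is not Anosov, to rule out that every such neighbor be topologically conjugate to $f$.

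First I would invoke the $C^1$-openness of the quasi-Anosov class to fix a neighborhood $\mathcal{U}_0$ of $f$ consisting only of quasi-Anosov diffeomorphisms. Then, using $\Omega$-stability, I would shrink $\mathcal{U}_0$ to a smaller neighborhood $\mathcal{U}\subset\mathcal{U}_0$ in which every $g$ is $\Omega$-conjugate to $f$. For each $g\in \mathcal{U}$ the two conclusions ``quasi-Anosov'' and ``$\Omega$-conjugate to $f$'' are then automatic, so the only remaining issue is density of non-conjugate examples.

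For this last step, the hypothesis that $f$ is not Anosov is used exactly once: since a structurally stable quasi-Anosov is Anosov, $f$ fails to be structurally stable. By the definition of structural stability, this means that in every $C^1$-neighborhood of $f$ there exists some $g$ that is not topologically conjugate to $f$. Intersecting with $\mathcal{U}$ yields a sequence $g_n \to f$ in $\mathcal U$ with each $g_n$ quasi-Anosov, $\Omega$-conjugate to $f$, and not topologically conjugate to $f$.

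The main obstacle, such as it is, is the non-conjugacy step, but here it reduces to invoking the characterization of structural stability within the quasi-Anosov world; no new construction or perturbation is required. I do not foresee any subtlety beyond ensuring that the neighborhood $\mathcal U$ can be chosen to witness both openness of quasi-Anosov and $\Omega$-conjugacy simultaneously, which is achieved by intersecting the two given neighborhoods.
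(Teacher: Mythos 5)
Your argument is correct and is essentially the one the paper intends: the proposition is stated right after the remarks that quasi-Anosov diffeomorphisms form a $C^1$-open set, are $\Omega$-stable, and are Anosov whenever structurally stable, and your proof is exactly the combination of these three facts (non-Anosov implies not structurally stable, hence every neighborhood contains a non-conjugate $g$, which by openness and $\Omega$-stability is quasi-Anosov and $\Omega$-conjugate to $f$). No further comment is needed.
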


 The following theorem by Ma\~n\'e relates quasi-Anosov
diffeomorphisms with hyperbolic sub-dynamics.
\begin{teo}[Ma\~n\'e \cite{manhe}] \label{teo.manhe}
A diffeomorphism $g$ is a quasi-Anosov diffeomorphism if and only if $M$ can be
embedded as a hyperbolic set for a diffeomorphism $f:N\to N$ by
means of an embedding $i:M\hookrightarrow N$ satisfying $fi=ig$.
\end{teo}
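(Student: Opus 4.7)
The theorem has two directions. For the easy direction $(\Leftarrow)$, suppose $i:M\hookrightarrow N$ embeds $M$ as a hyperbolic set of $f:N\to N$ with $f\circ i=i\circ g$, and identify $M$ with $i(M)$. I would verify the two properties defining quasi-Anosov. For quasi-transversality, any $y\in W^s_g(x)$ satisfies $d(g^n x,g^n y)\to 0$, hence $d(f^n(ix),f^n(iy))\to 0$, so $i(W^s_g(x))\subset W^s_f(ix)$; an analogous inclusion holds for unstable manifolds. Differentiating, $di(T_xW^s_g(x))\subset E^s_{ix}$ and $di(T_xW^u_g(x))\subset E^u_{ix}$, so the transversality $E^s\cap E^u=\{0\}$ of the hyperbolic splitting of $f$ yields $T_xW^s_g(x)\cap T_xW^u_g(x)=\{0\}$, which is precisely condition (\ref{eq.quasi-transversal}). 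For Axiom A, $\Omega(g)$ is a compact invariant subset of the hyperbolic set $M$, hence hyperbolic; density of periodic orbits in $\Omega(g)$ follows from the Anosov closing / shadowing lemma applied to $f|_M$.

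For the harder direction $(\Rightarrow)$, given $g$ quasi-Anosov, the plan is to realize $M$ as the zero section of a vector bundle $E=E^{ss}\oplus E^{uu}\to M$, then compactify $E$ to a closed manifold $N$ (for instance by doubling a closed disk bundle of $E$ across its boundary sphere bundle), and finally define $f:N\to N$ to project to $g$ on $M$ while acting as a very strong contraction on $E^{ss}$ fibers and a very strong expansion on $E^{uu}$ fibers, with rate $\mu\gg 1$ exceeding all the hyperbolic rates of $g|_{\Omega(g)}$. The fiber ranks are chosen so that $\dim(E^s\oplus E^{ss})$ and $\dim(E^u\oplus E^{uu})$ sum to $\dim N$, with any ``defect'' dimension of $T_xM-(E^s_x+E^u_x)$ absorbed into an appropriate tilt. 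Over $\Omega(g)$, the obvious candidate hyperbolic splitting $\widetilde E^s_x=E^s_x\oplus E^{ss}_x$ and $\widetilde E^u_x=E^u_x\oplus E^{uu}_x$ is transverse thanks to the quasi-transversality of $g$ combined with $E^{ss}\cap E^{uu}=\{0\}$.

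The main obstacle is to extend this splitting from $\Omega(g)$ to all of $M$ while keeping it $Tf$-invariant and uniformly hyperbolic. Outside $\Omega(g)$ the subspaces $E^s_x,E^u_x$ are not canonically given as bundle sections, and quasi-transversality does \emph{not} provide $E^s_x+E^u_x=T_xM$, so part of $T_xM$ must be absorbed into one of $\widetilde E^s$ or $\widetilde E^u$ by a continuous choice of tilt sub-bundle. I would exploit the large rate $\mu$ to produce a dominated splitting of $TN|_M$ globally on $M$, and then invoke the standard cone-field and graph-transform arguments to upgrade domination to uniform hyperbolicity; the gap between $\mu$ and the hyperbolic rates of $g$ is what makes these cone estimates work uniformly, even over non-periodic orbits of $\Omega(g)$ and over wandering points. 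Once the linearized hyperbolic structure along $M$ is in place, extending it to a uniformly hyperbolic neighborhood of $M$ in $N$ is routine from the stable manifold theorem, thereby completing the embedding and the proof.
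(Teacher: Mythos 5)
The paper does not prove this statement at all: it is quoted verbatim from Ma\~n\'e's paper \cite{manhe}, so there is no internal proof to compare yours against. Judged on its own merits, your attempt has genuine gaps in both directions, and the one in the forward direction is fatal to the strategy as described.

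For the $(\Leftarrow)$ direction, the quasi-transversality part is fine, but ``density of periodic orbits in $\Omega(g)$ follows from the Anosov closing / shadowing lemma applied to $f|_M$'' does not work as stated. The closing lemma applied to a recurrent orbit segment inside the hyperbolic set $M$ produces a periodic point of $f$ \emph{near} $M$, not \emph{in} $M$; one only gets the periodic point inside the set when the set is locally maximal. For a general compact hyperbolic set the conclusion is simply false (embed a minimal subshift without periodic points inside a horseshoe: every point is nonwandering for the restriction, yet there are no periodic points). So you must either prove that a closed invariant submanifold which is a hyperbolic set is locally maximal, or follow Ma\~n\'e and deduce Axiom A from the quasi-hyperbolicity of the cocycle $Dg=Df|_{TM}$; neither is a one-line application of shadowing.

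For the $(\Rightarrow)$ direction, the decisive objection is that the large rate $\mu$ cannot do the work you assign to it. Since $f(M)=M$ in any such construction, $TM$ is a $Df$-invariant subbundle of $T_MN$ and $Df|_{TM}=Dg$, \emph{regardless} of how you couple or tilt the fiber directions. If $T_MN=\widetilde E^s\oplus\widetilde E^u$ were a hyperbolic splitting, then writing any nonzero $v\in T_xM$ as $v=v^-+v^+$ with $v^\pm\in\widetilde E^{s/u}_x$ forces $\sup_{n\in\mathbb{Z}}\|Dg^n_xv\|=\infty$. This is a necessary condition on $g$ alone, completely independent of $\mu$, so ``choosing $\mu\gg1$'' and cone-field arguments cannot create hyperbolicity where this condition is in doubt. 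The entire content of the forward direction is therefore hidden in two steps you do not address: (i) Axiom A together with quasi-transversality (and the no-cycles structure, controlling how a tangent vector transported along a wandering orbit transfers between the splittings of the successive basic sets it visits) implies that $Dg$ has no nonzero bounded orbit on $TM$; and (ii) Ma\~n\'e's lemma that such a quasi-hyperbolic bundle automorphism becomes genuinely hyperbolic after direct-summing with a trivial bundle of sufficiently large rank, which is precisely where the invariant ``tilt'' subbundle absorbing the defect directions $T_xM\ominus(T_xW^s(x)\oplus T_xW^u(x))$ is actually constructed. Your outline names the right ambient construction (disk bundle doubled to a closed $N$, fiberwise contraction/expansion), but without (i) and (ii) it is the statement of the theorem rather than a proof of it.
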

This characterization reduces the proof of Theorem
\ref{thm.kneser.decomposition} to proving Theorem
\ref{teo.quasi.Anosov.dim3}. See also \S \ref{section thm1.1}. We
shall review some
properties of quasi-Anosov diffeomorphisms:%
\begin{prop}\cite{manhe}
Quasi-Anosov diffeomorphisms satisfy the no-cycles condition.
\end{prop}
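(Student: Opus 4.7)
The plan is to argue by contradiction. Suppose $\succeq$ fails to be anti-symmetric, so there is a cycle $\Lambda_0 \to \Lambda_1 \to \cdots \to \Lambda_m = \Lambda_0$ involving at least two distinct basic sets.

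First I would extract a numerical consequence of (\ref{eq.quasi-transversal}). At any heteroclinic point $z\in W^u(x_i)\cap W^s(y_{i+1})$ with $x_i\in\Lambda_i$ and $y_{i+1}\in\Lambda_{i+1}$, the invariant manifolds of $z$ coincide with those of its asymptotic limits: $W^s(z)=W^s(y_{i+1})$ and $W^u(z)=W^u(x_i)$. So (\ref{eq.quasi-transversal}) at $z$ becomes
$$\st(\Lambda_{i+1})+(n-\st(\Lambda_i))\le n,$$
with $n=\dim M$, i.e.\ $\st(\Lambda_{i+1})\le\st(\Lambda_i)$. Chaining around the cycle forces equality at every step, so the stable index is constant along the cycle and every heteroclinic intersection in the cycle is in fact transverse, not merely quasi-transverse.

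Next I would produce a genuine periodic orbit that shadows the cycle. Pick periodic points $q_i\in\Lambda_i$ (dense in each basic set by Axiom A), and, combining the transversality from the first step with the $\lambda$-lemma and the topological mixing of each $\Lambda_i$, select transverse heteroclinic connections from $q_i$ to $q_{i+1}$ for every $i$. Concatenating these, with long enough dwell times near each $q_i$, yields a periodic $\delta$-pseudo-orbit whose trace visits prescribed neighborhoods of all the $\Lambda_i$. This pseudo-orbit lies in the compact hyperbolic set formed by the $\Lambda_i$ together with these finitely many heteroclinic orbits, so the shadowing lemma applies and produces a genuine periodic orbit $\gamma$ shadowing it. Now $\gamma\subset\Omega(g)$, so $\gamma$ lies entirely inside some single basic set $\Lambda_k$; but by construction $\gamma$ passes within $\varepsilon$ of every $\Lambda_i$ in the cycle, and choosing $\varepsilon<\tfrac{1}{2}\min_{i\ne k}d(\Lambda_i,\Lambda_k)$ (positive because there are only finitely many pairwise disjoint compact basic sets) contradicts $\gamma\subset\Lambda_k$ as soon as some $\Lambda_i\ne\Lambda_k$ appears in the cycle, i.e.\ whenever the cycle is nontrivial.

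The main obstacle is the middle step: one must promote the transverse heteroclinic intersections between the basic sets to transverse heteroclinic connections between the chosen periodic points $q_i$ and verify that their concatenation is a legitimate shadowable pseudo-orbit. The $\lambda$-lemma together with topological mixing inside each $\Lambda_i$ is what makes this possible, and the rigidity of stable indices derived in the first step, itself a direct consequence of (\ref{eq.quasi-transversal}), is exactly what upgrades the heteroclinic crossings from quasi-transverse to transverse so that the $\lambda$-lemma can be applied.
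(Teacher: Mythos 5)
Your argument is correct, and its decisive step coincides exactly with the paper's: quasi-transversality at a point of $W^u(\Lambda_i)\cap W^s(\Lambda_{i+1})$ gives $\st(\Lambda_{i+1})+\bigl(n-\st(\Lambda_i)\bigr)\le n$, hence $\st(\Lambda_{i+1})\le\st(\Lambda_i)$; around a cycle all stable indices must then agree, and since the intersecting tangent spaces have complementary dimensions and trivial intersection, every heteroclinic crossing in the cycle is genuinely transverse. The two proofs part ways only in the endgame. The paper finishes in one line: with transversality and complementary indices, the classical $\lambda$-lemma argument shows the heteroclinic points $x_i$ are themselves non-wandering, so each lies in a single basic set whose $\alpha$- and $\omega$-limit sets meet $\Lambda_i$ and $\Lambda_{i+1}$, forcing all the basic sets in the cycle to coincide. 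You instead concatenate transverse connections between chosen periodic points $q_i$ into a periodic pseudo-orbit and invoke the shadowing lemma to produce a genuine periodic orbit visiting $\eps$-neighborhoods of every $\Lambda_i$, contradicting the disjointness of basic sets. Your route buys an explicit periodic witness, but it carries two technical debts that you name without discharging: promoting transverse intersections of $W^u(\Lambda_i)$ with $W^s(\Lambda_{i+1})$ to transverse intersections $W^u(q_i)\pitchfork W^s(q_{i+1})$ (true, by continuity of invariant manifolds over a basic set and openness of transversality), and verifying that the union of the $\Lambda_i$ with the finitely many heteroclinic orbits is a compact hyperbolic set on which shadowing applies (true, but it is here that the transversality from step one is really consumed, via the inclination lemma, to splice the splittings continuously). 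Neither is a gap---both are standard and amount to roughly the same work the paper hides in its appeal to ``the $x_i$ belong to $\Omega(g)$''---so the arguments are of comparable depth, the paper's being merely the more economical formulation.
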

\begin{proof}If $\Lambda_i$ and $\Lambda_j$ are two basic sets satisfying
$\Lambda_i\to\Lambda_j$, then $W^u(x_i)\cap W^s(x_j)\ne\emptyset$
for some $x_k\in\Lambda_k$. It follows from quasi-transversality
that
$$[n-\st(\Lambda_i)]+\st(\Lambda_j)=\dim E^u_{x_i}+\dim
E^s_{x_j}\leq n$$ where $n$ is the dimension of $M$, hence
$\st(\Lambda_j)\leq \st(\Lambda_i)$. We get by transitivity that
\begin{equation}\label{estable.decrece} \Lambda_i\succeq
\Lambda_j\quad\Rightarrow\quad \st(\Lambda_i)\geq
\st(\Lambda_j)\end{equation} Suppose that, 
$$\Lambda_1\to\Lambda_2\to\dots\to\Lambda_k\to\Lambda_1.$$
We have, in the first place that $\st(\Lambda_i)=\st(\Lambda_1)$,
hence all intersections $x_i\in W^u(\Lambda_i)\cap
W^s(\Lambda_{i+1})$ for $i=1,\dots,k-1$, and $x_k\in
W^u(\Lambda_k)\cap W^s(\Lambda_1)$ are transversal, since 
$$[n-\st(\Lambda_i)]+\st(\Lambda_1)=\dim E^u_{x_i}+\dim
E^s_{x_1}= n$$. This implies the
$x_i$'s belong to $\Omega(g)$, hence $\Lambda_i=\Lambda_i$ for all
$i=1,\dots, k$, and so $g$ satisfies the no-cycles
condition.\end{proof}

In the particular case of a quasi-Anosov diffeomorphism of a
3-dimensional manifold, this implies there can be only basic sets
with stable index 2 or 1; and basic sets with stable index one can
only succeed basic sets with stable index one. We delay the proof of
the next proposition until the next section.
\begin{prop}\label{p.basic}
If $f$ is a quasi-Anosov diffeomorphism and $\Lambda_0$ is a
co\-dimension-one expanding attractor, and $\Lambda$ is a basic set
satisfying $\Lambda\succeq \Lambda_0$ with $\st(\Lambda)=1$, then
$\Lambda=\Lambda_0$ or else $\Lambda$ is a periodic point.
\end{prop}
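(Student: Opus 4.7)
The plan is to argue by contradiction. Suppose $\Lambda$ is a non-trivial basic set (not reduced to a single periodic orbit) with $\st(\Lambda)=1$, $\Lambda\succeq\Lambda_0$, and $\Lambda\neq\Lambda_0$; the goal is to produce a contradiction. First, I fix a chain
$$\Lambda=\Lambda_{j_0}\to\Lambda_{j_1}\to\cdots\to\Lambda_{j_m}=\Lambda_0$$
realising the relation. By the monotonicity inequality \eqref{estable.decrece} together with $\st(\Lambda)=\st(\Lambda_0)=1$, every $\st(\Lambda_{j_i})=1$; and since at each link $\dim E^u+\dim E^s=2+1=3=\dim M$, the quasi-transversality condition \eqref{eq.quasi-transversal} upgrades to ordinary transversality along the whole chain, exactly as in the proof of the no-cycles condition.

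Next I rule out the possibility that $\Lambda$ is itself an attractor. If $W^u(\Lambda)=\Lambda$, then the relation $\Lambda\to\Lambda_{j_1}$ would exhibit a point $x\in\Lambda\cap W^s(\Lambda_{j_1})$; but $\Lambda$ is closed and $f$-invariant, so $f^n(x)\in\Lambda$ for every $n$, while $f^n(x)\to\Lambda_{j_1}$, contradicting the disjointness of distinct basic sets. Hence $\Lambda$ is non-trivial and not an attractor, and by Bowen's characterization there exists $y\in\Lambda$ with $W^u(y)\not\subset\Lambda$, so the $2$-dimensional unstable manifolds of points of $\Lambda$ extend beyond $\Lambda$ and must intersect $W^s(\Lambda_0)$.

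The decisive step, and where I expect the main obstacle, is to convert the accumulation of $W^u(\Lambda)$ on $\Lambda_0$ into a failure of \eqref{eq.quasi-transversal}. I would pick a periodic point $p\in\Lambda$ whose $2$-dimensional $W^u(p)$ transversally meets $W^s(\Lambda_0)$, available by density of periodic orbits in $\Lambda$ and openness of $W^s(\Lambda_0)$; the inclination ($\lambda$-)lemma then shows that forward iterates of small disks in $W^u(p)$ through that intersection $C^1$-converge to leaves of the $2$-dimensional unstable lamination of $\Lambda_0$. Since $\Lambda_0$ is codimension-one \emph{expanding}, that lamination lies inside $\Lambda_0$, so $\overline{W^u(p)}\supseteq\Lambda_0$; simultaneously, $\overline{W^u(p)}\supseteq\Lambda$ by density of homoclinic points of $p$ in its mixing component of the basic set. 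The hard part is then to extract an explicit quasi-transversality violation from this double accumulation: my intended tactic is to choose a periodic point $p_0\in\Lambda_0$, examine how the sheets of $W^u(p)$ stack up near $p_0$ across the $1$-dimensional $W^s_{\mathrm{loc}}(p_0)$, and combine the Cantor-by-$2$-disk local product structure of $\Lambda_0$ with the non-triviality of $\Lambda$ to locate a limit point at which a tangent direction of $W^u(p)$ falls into the $1$-dimensional stable direction of some point in $\overline{W^u(p)}\cap\Lambda$, contradicting \eqref{eq.quasi-transversal}. Making this compactness/limit argument rigorous, while tracking invariance of the two laminations involved together with the fact that $p\notin\Lambda_0$, is the principal technical step I expect to grind through.
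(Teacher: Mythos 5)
Your setup (all stable indices along the chain equal $1$, hence every relevant intersection is genuinely transversal; reduction to a periodic point $x\in\Lambda$ whose $2$-dimensional unstable manifold meets $W^s(\Lambda_0)$) matches the paper's, but the proof stops exactly where the work begins: the ``decisive step'' is only announced as an intended tactic, and that tactic points in a direction that does not lead to the paper's contradiction. You aim to extract a \emph{violation of quasi-transversality} from the accumulation of $W^u(x)$ on both $\Lambda_0$ and $\Lambda$. No such violation is forced: since $\st(\Lambda)=\st(\Lambda_0)=1$, every intersection in sight has $\dim E^u+\dim E^s=3$, so quasi-transversality is equivalent to transversality and is an open, stable condition; the double accumulation $\overline{W^u(x)}\supseteq\Lambda_0\cup\Lambda$ is perfectly compatible with it. The obstruction the paper exploits is topological, not infinitesimal.

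Concretely, the paper's proof runs through the Grines--Zhuzhoma structure theory of the basin $W^s(\Lambda_0)$: every point of $W^s(\Lambda_0)-\Lambda_0$ lies on an empty stable arc $(y_1,y_2)^s_{\emptyset}$ joining unstable manifolds of \emph{associated boundary points} $p_1,p_2$ of $\Lambda_0$, and these arcs assemble into the sets $C_{p_1,p_2}$ (connecting cylinder) and $D_{p_1,p_2}$ (a trivial $[0,1]$-bundle over $W^u(p_1)-\mathrm{int}\,D_{p_1}$). The key lemma (the analogue of Theorem 6.1 of Grines--Zhuzhoma, valid here because transversality holds) states that $W^u(x)\cap C_{p_1,p_2}$ is a \emph{single} circle $S$ isotopic to $S_{p_1}$, and that $S$ and $f(S)$ bound an annulus in $W^u(x)$ which is a fundamental domain of $W^u(x)$ contained in $D_{p_1,p_2}\subset W^s(\Lambda_0)$. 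Invariance of $W^s(\Lambda_0)$ then gives $W^u(x)-x\subset W^s(\Lambda_0)$, hence $(W^u(x)-x)\cap\Lambda=\emptyset$; since for a non-trivial basic set $W^u(x)\cap W^s(x)$ would be dense in the component of $\Lambda$ containing $x$, the basic set $\Lambda$ must be the orbit of $x$. None of this machinery (boundary points, characteristic spheres, uniqueness of the intersection circle) appears in your proposal, and without it the ``stacking of sheets near $p_0$'' argument has no mechanism to produce a contradiction. You also omit the non-orientable case, which the paper handles by lifting to an orientable double cover. As it stands the proposal is a plausible opening, not a proof.
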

Analogously, if $\Lambda_0$ is a codimension-one repeller, and
$\Lambda$ is a basic set satisfying $\Lambda_0\succeq\Lambda$ with
$\st(\Lambda)=2$, then $\Lambda=\Lambda_0$ or $\Lambda$ is a
periodic point. This implies:
\begin{prop}\label{codim.one.att}
All attractors of a quasi-Anosov diffeomorphism of a 3-dimensional
manifold are codimension-one expanding attractors, unless the
diffeomorphism is Anosov. An analogous statement holds for
repellers.
\end{prop}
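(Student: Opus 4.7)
The plan is a case analysis on $\st(\Lambda)$ for an arbitrary attractor $\Lambda$ of $f$, assuming $f$ is not Anosov and (without loss of generality) $M$ is connected. Since $\st(\Lambda)\in\{0,1,2,3\}$ and $\st(\Lambda)=1$ is exactly the codimension-one expanding case, the task is to rule out the other three values; the dual statement for repellers will follow by applying the result to $f^{-1}$.

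The extremal cases go by short topological arguments. If $\st(\Lambda)=0$, then each $W^u(x)$ is $3$-dimensional, so $W^u(\Lambda)=\Lambda$ forces $\Lambda$ to have non-empty interior; being closed, $\Lambda=M$, whence $f$ is Anosov. If $\st(\Lambda)=3$, then $\Lambda$ is a periodic sink with open basin $W^s(\Lambda)\supsetneq\Lambda$; for any $y\in W^s(\Lambda)\setminus\Lambda$ the $\alpha$-limit lies in a basic set $\Lambda''\ne\Lambda$ (else $y\in W^u(\Lambda)=\Lambda$), the monotonicity $\st(\Lambda'')\geq\st(\Lambda)$ along $\succeq$ used in the no-cycles proof forces $\st(\Lambda'')=3$, and then $W^u(\Lambda'')=\Lambda''$ puts $y$ in $\Lambda''$, whence $\omega(y)\subset\Lambda\cap\Lambda''=\emptyset$, a contradiction. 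The symmetric argument applied to $f^{-1}$ shows no basic set has $\st=0$, so in this setting every basic set has $\st\in\{1,2\}$.

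The substantive case is $\st(\Lambda)=2$, which I intend to handle using the dual of Proposition \ref{p.basic} stated just before the statement. For this I need a codimension-one shrinking repeller $R$ with $R\succeq\Lambda$; I would construct $R$ by iterating strict predecessors in $\succeq$ starting from $\Lambda$. A strict predecessor of $\Lambda$ exists because any $y\in W^s(\Lambda)\setminus\Lambda$ has $\alpha(y)\notin\Lambda$. At each subsequent non-repeller basic set $\Lambda_i$ in the chain the only obstruction to a further strict predecessor is the containment $W^s(\Lambda_i)\setminus\Lambda_i\subset W^u(\Lambda_i)$, and this is ruled out by a tangent-distribution comparison in the local product structure: the $2$-dimensional stable leaves of $\Lambda_i$ are tangent to $E^s$, while the at-most-$2$-dimensional $W^u(\Lambda_i)$ has tangent distribution always containing $E^u$, and $E^u\cap E^s=\{0\}$ prevents the two from coinciding. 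Finiteness of the basic sets then forces the chain to terminate at a repeller $R$ with $\st(R)\geq 2$, hence $\st(R)=2$, so $R$ is codimension-one shrinking. The dual of Proposition \ref{p.basic} applied to $(R,\Lambda)$ then yields $\Lambda=R$ or $\Lambda$ periodic. The first possibility forces $\Lambda$ to be simultaneously attractor and repeller, whose local product structure fills an open $3$-dimensional neighborhood inside $\Lambda$, making $\Lambda=M$ and $f$ Anosov; the second contradicts $W^u(\Lambda)=\Lambda$ because a periodic point of stable index $2$ has a non-trivial $1$-dimensional unstable leaf strictly containing its orbit.

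The main technical obstacle is verifying that the ascending chain in the case $\st(\Lambda)=2$ truly terminates at a repeller rather than at some maximal non-repeller basic set; this is exactly where the restriction $\st\in\{1,2\}$ and the quasi-Anosov local product structure combine, via the tangent-distribution comparison above.
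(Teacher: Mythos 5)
Your proof is correct and is essentially the paper's argument run in the dual direction: the paper starts from a repeller of stable index $1$, takes the minimal element of a maximal chain through it, and applies Proposition \ref{p.basic} together with the monotonicity (\ref{estable.decrece}) of the stable index along $\succeq$, exactly as you take the maximal element of a chain above an index-$2$ attractor and apply the dual of Proposition \ref{p.basic}. Your extra cases $\st(\Lambda)\in\{0,3\}$ and your termination argument for the chain are facts the paper has already disposed of (all basic sets of a $3$-dimensional quasi-Anosov diffeomorphism have stable index $1$ or $2$, and repellers are the maximal elements of $\succeq$ under Axiom A plus no cycles), so that additional work is redundant but harmless.
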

\begin{proof}
Indeed, let $\Lambda_R$ be a repeller such that $\st(\Lambda_R)=1$
(hence, not co\-di\-men\-sion-one). There is a maximal chain of
$\succeq$ containing $\Lambda_R$. Let $\Lambda_A$ be a minimal
element of that chain. Then, due to (\ref{estable.decrece}) in the
proof above, $\Lambda_A$ is a codimension-one expanding attractor.
But then Proposition \ref{p.basic} implies the repeller $\Lambda_R$
equals $\Lambda_A$, since $\Lambda_A$ cannot be a periodic point.
Therefore $\Lambda_R=\Lambda_A=M$.
\end{proof}
Note that Propositions \ref{p.basic} and \ref{codim.one.att} above
prove Item (1) of Theorem \ref{teo.quasi.Anosov.dim3}. Item (2) of
Theorem \ref{teo.quasi.Anosov.dim3} follows from results in next
section.

\section{Codimension-one expanding attractors and shrinking
repellers - Proof of Theorem
\ref{teo.quasi.Anosov.dim3}}\label{section.thm1.2}
Before proving Proposition~\ref{p.basic} we review properties of
codimension-one expanding attractors.  A codimension-one expanding
attractor $\Lambda$ is {\de orientable} if the intersection index of
$W^s(x)\cap W^u(y)$ is constant at all its intersection points, for
$x,y\in \Lambda$. This notion was first introduced by Grines \cite{grines1974,grines1975}. %
Let us also recall the following result by Zhuzhoma and Medvedev:
\begin{teo}[Medvedev-Zhuzhoma \cite{medvedevzhuzhoma}]\label{thm.medvedev.zhuzhoma} If $M$ is an orientable closed 3-manifold, then
all codimension-one expanding attractors and shrinking repellers are
orientable.
\end{teo}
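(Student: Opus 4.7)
The plan is to produce continuous orientations on both $E^u|_\Lambda$ and $E^s|_\Lambda$; granted such orientations, the intersection index at any $z\in W^s(x)\cap W^u(y)$, $x,y\in\Lambda$, is the sign comparing the oriented direct sum $E^s_z\oplus E^u_z$ with the fixed orientation of $T_zM$, and this is locally constant, hence constant by connectedness of the basic set $\Lambda$.

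Fix an orientation of $M$. Since $TM|_\Lambda=E^s\oplus E^u$ is orientable, the line bundle $E^s|_\Lambda$ is orientable if and only if the plane bundle $E^u|_\Lambda$ is; so it suffices to orient the two-plane bundle $E^u|_\Lambda$. Fix a periodic point $x\in\Lambda$. Its unstable manifold $W^u(x)$ is diffeomorphic to $\mathbb{R}^2$, hence orientable; choose an orientation, which equips each $E^u_y=T_yW^u(x)$ with an orientation for $y\in W^u(x)$. Because $\Lambda$ is topologically transitive, $W^u(x)$ is dense in $\Lambda$, and I aim to extend this orientation continuously (with respect to the ambient topology inherited from $M$) to a section of the orientation $\mathbb{Z}/2$-bundle of $E^u|_\Lambda$.

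The main step, and the principal obstacle, is consistency across plaques in the local product structure. In a small product chart $\Lambda\cap U\cong C\times D^u$ (with $C$ a stable Cantor transversal and $D^u$ a two-dimensional unstable disk), the immersion $W^u(x)\hookrightarrow M$ meets $U$ in a dense collection of plaques $\{c\}\times D^u$, each inheriting an orientation from the chosen orientation of $W^u(x)$. One must verify that these induced orientations all agree with a single continuous local trivialization of $E^u$ over $\Lambda\cap U$. The key input here is the orientability of $M$: since $W^u(x)$ is an injectively immersed oriented two-submanifold of an orientable three-manifold, its normal line bundle is trivial, giving a globally continuous coorientation of $W^u(x)$, unique up to overall sign because $W^u(x)$ is simply connected. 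If two plaques near a common $z\in\Lambda$ were to induce opposite local orientations of $E^u$, the two corresponding choices of coorientation (continuous sections of $E^s$ along each plaque) would force the globally continuous coorientation of $W^u(x)$ to take opposite values on sheets of $W^u(x)$ arbitrarily close in $M$, contradicting continuity of the ambient bundle $E^s$ at $z$.

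Granted this consistency, the orientation of $E^u|_{W^u(x)}$ extends by density to a continuous orientation of $E^u|_\Lambda$, and the induced orientation of $E^s|_\Lambda$ makes the intersection index constant on $\Lambda$. The statement for codimension-one shrinking repellers is obtained by applying the same argument to $f^{-1}$, under which a repeller becomes an expanding attractor.
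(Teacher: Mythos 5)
This statement is not proved in the paper at all: it is imported verbatim from Medvedev--Zhuzhoma \cite{medvedevzhuzhoma}, so your proposal has to be judged on its own merits rather than against an in-paper argument. Your reduction is sensible (orient $E^u|_\Lambda$, note $w_1(E^s|_\Lambda)=w_1(E^u|_\Lambda)$ since $M$ is orientable, and try to propagate the orientation of a single dense leaf $W^u(x)\cong\mathbb{R}^2$), and you correctly isolate the crux: whether the leafwise orientation is consistent across the Cantor set of plaques in a local product chart. But your resolution of that crux is fallacious. The coorientation of $W^u(x)$ is a nonvanishing section of $E^s$ along $W^u(x)$ that is continuous only in the \emph{intrinsic} (leaf) topology of $W^u(x)$; two plaques that are close in $M$ are far apart in the leaf, being joined only by a long path that wanders through the attractor. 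Continuity of the bundle $E^s$ over $\Lambda$ says that the \emph{fibers} $E^s_{y}$ vary continuously, not that a section defined plaque-by-plaque selects a continuously varying half-line; nothing prevents the coorientation from landing in opposite components of $E^s_z\setminus\{0\}$ on two plaques accumulating on $z$. So the ``contradiction with continuity of $E^s$ at $z$'' is not a contradiction --- the transverse continuity of the leafwise coorientation is exactly the statement to be proved, and the argument is circular at its only load-bearing step.

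A decisive sanity check: your argument uses nothing specific to dimension three. Run it verbatim for a one-dimensional expanding attractor on an orientable surface ($W^u(x)\cong\mathbb{R}$ is orientable and simply connected, its normal line bundle along the immersion is trivial, the leaf is dense in $\Lambda$), and it would ``prove'' that every such attractor is orientable. That is false: the Plykin attractor on $S^2$ (obtained from a hyperbolic toral automorphism pushed down through the branched double cover $\mathbb{T}^2\to S^2$, whose invariant foliations acquire $1$-prong, hence non-orientable, singularities) is a non-orientable one-dimensional expanding attractor on an orientable surface --- this is precisely why Grines's classification \cite{grines1975,grines1977} treats the orientable case separately. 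Any correct proof must therefore exploit the specific geometry of two-dimensional unstable leaves and one-dimensional stable arcs in a $3$-manifold (in \cite{medvedevzhuzhoma} this goes through the structure of bunches of boundary periodic points and shows that non-orientability of $\Lambda$ forces an orientation-reversing loop in $M$); no such input appears in your argument.
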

Derived from Anosov (or DA-) diffeomorphisms were introduced by
Smale in \cite{smale} (see also \cite{williams}). They are certain
 deformations of hyperbolic automorphisms of the torus. We shall
 use the following definition \cite{plykin1}:\par
 Corresponding to a hyperbolic toral automorphism $A$ with stable index one, and a finite
 set $Q$ of $A$-periodic points, there is a diffeomorphism
 $f^A_Q:{\mathbb T}^3\to {\mathbb T}^3$ diffeotopic to $A$, such that $\Omega(f^A_Q)=\Lambda\cup Q$,
 where $\Lambda$ is a codimension-one expanding attractor and $Q$ is a finite set of $f^A_Q$-repelling periodic points.
 The stable manifolds of $f^A_Q$ coincide with the stable manifolds
 of $A$, except for a finite set of lines ${\mathcal L}_Q$. Each
 line $L\in{\mathcal L}_Q$ contains a point $q\in Q$. The component
 of $L-\Lambda$ containing $q$ is an interval whose endpoints
 $p^\pm$ are periodic boundary points of $\Lambda$.
We call $f^A_Q$  a {\de DA-diffeomorphism}.\par Plykin obtains
models for connected codimension-one expanding attractors using
DA-diffeomorphisms \cite{plykin1,plykin2}. See also \S8 of
\cite{GZ}. We shall also use some of his intermediate results:
\begin{teo}[Plykin \cite{plykin1}]\label{homotopy.torus} If $\Lambda$ is a connected
orientable codimension-one expanding attractor of a diffeomorphism
$g:M^3\to M^3$, then $W^s(\Lambda)$ has the homotopy type of
${\mathbb T}^3-Q$, where $Q$ is a finite set of points. There is a
finite point-compactification $\overline{W^s(\Lambda)}$ of
$W^s(\Lambda)$ having the homotopy type of ${\mathbb T}^3$, and a
homeomorphism $\bar g:\overline{W^s(\Lambda)}\to
\overline{W^s(\Lambda)}$ extending $g|_{W^s(\Lambda)}$, and
admitting two $\bar g$-invariant fibrations that extend,
respectively, the stable and unstable manifolds of $\Lambda$.
\end{teo}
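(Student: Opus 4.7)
The plan is to follow Plykin's strategy: recognize $\Lambda$ as a codimension-one expanding attractor of DA type and reverse-engineer an ambient torus. First, I would exploit the local product structure of $\Lambda$. Since $\Lambda$ is a codimension-one expanding attractor, each $x \in \Lambda$ has a local neighborhood inside $\Lambda$ of the form (Cantor set) $\times D^2_x$, where $D^2_x \subset W^u(x)$. Orientability provides a coherent transverse orientation for the $2$-dimensional unstable lamination, which together with compactness of $\Lambda$ allows the construction of a finite Markov partition of $\Lambda$ by rectangles $R_i \simeq K_i \times D^2_i$, organizing its combinatorics symbolically.

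Second, I would identify and classify the \emph{boundary periodic orbits} of $\Lambda$: those $p \in \Lambda$ such that one local side of the stable arc through $p$ meets $\Lambda$ only at $p$. Using the Markov partition together with the topological mixing of $\Lambda$, one proves that the set $B$ of boundary points is a finite union of periodic orbits. In the orientable setting these orbits pair up via the gaps in the stable lamination: for each such pair $(p_+, p_-)$ there is an open interval $I$ in a stable leaf of the ambient dynamics, bounded by $p_+$ and $p_-$ and disjoint from $\Lambda$, playing the role of a blown-up DA gap.

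Third, I would perform the compactification. Collapse each gap interval $I$ to a single point, obtaining one added point per pair of matched boundary orbits; let $Q$ be the resulting finite set and set $\overline{W^s(\Lambda)} := W^s(\Lambda) \cup Q$. Since $B$ is $g$-invariant the construction is $g$-equivariant, so $g|_{W^s(\Lambda)}$ extends to a homeomorphism $\bar g$ permuting $Q$ and exhibiting each $q \in Q$ as a repelling periodic point. The stable and unstable laminations of $\Lambda$ extend to two transverse $\bar g$-invariant foliations of $\overline{W^s(\Lambda)}$: the $1$-dimensional stable foliation fills in across $Q$ through the added points, and the $2$-dimensional unstable foliation of $\Lambda$ completes to the ambient unstable foliation surrounding each $q$.

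Finally, to identify the homotopy type I would appeal to a Franks-type classification: a closed $3$-manifold carrying transverse $1$- and $2$-dimensional foliations invariant under a map that is hyperbolic away from finitely many sources must be homotopy equivalent to $\T^3$. Hence $\overline{W^s(\Lambda)}$ has the homotopy type of $\T^3$ and $W^s(\Lambda)$ that of $\T^3 \setminus Q$, which also furnishes the claimed $\bar g$-invariant fibrations. The main obstacle I foresee is Step~2: proving finiteness of $B$ and the pairing of boundary orbits. This is precisely where orientability enters essentially (it rules out an odd number of boundary sheets meeting a common gap) and where the codimension-one hypothesis is needed to control the combinatorics of the Markov rectangles via Williams' inverse-limit picture.
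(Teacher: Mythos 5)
The paper does not prove this statement at all: it is quoted verbatim as a theorem of Plykin \cite{plykin1} and used as a black box, so there is no internal argument to measure your sketch against. Judged on its own terms, your outline captures the correct global shape of Plykin's argument (boundary periodic points, their pairing, a finite-point compactification, extension of the two invariant fibrations), but it has one genuine gap and one step that is wrong as written. The gap is the last step: you delegate the identification of the homotopy type to ``a Franks-type classification: a closed $3$-manifold carrying transverse $1$- and $2$-dimensional foliations invariant under a map that is hyperbolic away from finitely many sources must be homotopy equivalent to $\T^3$.'' No such off-the-shelf theorem exists in that generality; Franks' and Newhouse's results classify codimension-one \emph{Anosov} diffeomorphisms of closed manifolds, whereas here $\overline{W^s(\Lambda)}$ is not known in advance to be a closed manifold, $\bar g$ is only a homeomorphism, and the two ``fibrations'' are singular along $Q$. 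Proving that $\pi_1(\overline{W^s(\Lambda)})\cong\mathbb{Z}^3$, that the space is aspherical, and that the classifying map to $\T^3$ induced by the hyperbolic action of $\bar g_*$ on $H_1$ is a homotopy equivalence is precisely the content of Plykin's theorem; your sketch assumes it rather than proves it.

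The step that is wrong as written is the compactification. You propose to ``collapse each gap interval $I$ to a single point,'' where $I$ is an interval bounded by the associated pair $p_+,p_-$. First, collapsing an interval does not add points, and collapsing an arc with endpoints $p_\pm$ would identify the two boundary points and crush the interval bundle $D_{p_1,p_2}=\bigcup_x[x,\varphi_{p_1,p_2}(x)]^s_\emptyset$, destroying the unstable fibration. In the actual construction nothing in $W^s(\Lambda)$ is collapsed: the uncountably many gap arcs $(x,\varphi_{p_1,p_2}(x))^s_\emptyset$ survive as fibers, and what one adds is, for each associated pair, a single ideal point $\alpha$ that joins the two \emph{free stable separatrices} $W^s_\emptyset(p_1)$ and $W^s_\emptyset(p_2)$ into a compact arc $p_1\cup W^s_\emptyset(p_1)\cup\alpha\cup W^s_\emptyset(p_2)\cup p_2$, with $\alpha$ a repelling periodic point (compare the explicit construction of $\bar D$ and the arc $a_{pq}$ in \S\ref{section thm1.1}). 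Two smaller inaccuracies: the pairing of boundary points is a consequence of $\dim M\geq 3$ (connectedness of $W^u(p)-p$), not of orientability, which instead enters in the coherence of intersection indices; and the theorem is stated without assuming $\Lambda$ has a dense orbit, so your appeal to topological mixing in Step 2 should be removed.
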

An analogous result holds for non-orientable attractors: there
exists a two-sheeted covering $\pi:\overline{W^s(\Lambda)}\to
W^s(\Lambda)$ and a covering homeomorphism $\bar g:
\overline{W^s(\Lambda)}\to \overline{W^s(\Lambda)}$ that commutes
with the involution $\theta: \overline{W^s(\Lambda)}\to
\overline{W^s(\Lambda)}$ associated to $\pi$, such that
$\overline{W^s(\Lambda)}$ has the homotopy type of ${\mathbb T}^3$
\cite{plykin2}.\par Let us note that results above do not require
that $\Lambda$ has a dense orbit.
\begin{teo}[Plykin \cite{plykin1,plykin2}]\label{teo.plykin}
If $\Lambda$ is a connected orientable codimension-one expanding
attractor of a diffeomorphism $g:M^n\to M^n$ having a dense unstable
manifold, then there exist a hyperbolic toral automorphism $A$ with
stable index one, and a finite set $Q$ of $A$-periodic points, such
that $g|_{W^s(\Lambda)}$ is topologically conjugate to the
DA-diffeomorphism
$f^A_Q|_{{\mathbb T}^n-Q}$.\par%
If $\Lambda$ is non-orientable, then there is a two-sheeted covering
$\pi:\overline{W^s(\Lambda)}\to W^s(\Lambda)$ with an associate
involution $\theta:\overline{W^s(\Lambda)}\to
\overline{W^s(\Lambda)}$, and a covering homeomorphism
$g:\overline{W^s(\Lambda)}\to\overline{W^s(\Lambda)}$ commuting with
$\theta$ that is topologically conjugate to a DA-diffeomorphism
$f^A_Q$ as described above.
\end{teo}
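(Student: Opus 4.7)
The plan is to leverage Theorem~\ref{homotopy.torus} to pass from the noncompact basin $W^s(\Lambda)$ to a compact model homotopy-equivalent to $\mathbb{T}^n$. First, apply Theorem~\ref{homotopy.torus} in the orientable case to obtain the point-compactification $\overline{W^s(\Lambda)}$, the extended homeomorphism $\bar g$, and the two $\bar g$-invariant fibrations $\mathcal{F}^s$, $\mathcal{F}^u$ extending the stable and unstable manifolds of $\Lambda$. The hypothesis that the unstable manifold is dense forces the unstable fibration of $\Lambda$ to fill an open dense set of leaves, which together with the codimension-one condition means $\mathcal{F}^u$ has one-dimensional leaves that sweep out $\overline{W^s(\Lambda)}$. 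The first milestone is to upgrade the homotopy equivalence $\overline{W^s(\Lambda)}\simeq\mathbb{T}^n$ to an actual homeomorphism; this is achieved by building a coordinate chart from the transverse pair $(\mathcal{F}^s,\mathcal{F}^u)$, using that a codimension-one expanding attractor with a dense unstable manifold locally models the product structure of a linear hyperbolic torus.

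Next, I would extract the linear model $A$ from the dynamics. Lift $\bar g$ to a homeomorphism $\tilde g:\mathbb{R}^n\to\mathbb{R}^n$ of the universal cover; the induced action on $\pi_1(\overline{W^s(\Lambda)})\cong\mathbb{Z}^n$ is an integer matrix $A\in\mathrm{GL}_n(\mathbb{Z})$. The two invariant fibrations show that $A$ preserves a complementary pair of subspaces which must be the eigenspaces corresponding to eigenvalues of modulus $\ne 1$: the leafwise expansion of $\mathcal{F}^u$ forces the unstable eigenvalues to have product of modulus $>1$ and the contraction of $\mathcal{F}^s$ does the analogous thing, so $A$ is hyperbolic with $\mathrm{st}(A)=1$. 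The finite set of points added in the compactification is exactly the set $Q$ of periodic sources of the DA-model, because the repelling periodic boundary points $p^\pm$ of $\Lambda$ on each exceptional line $L\in\mathcal{L}_Q$ coincide (in the compactification) with the point $q\in Q$ that compactifies that end of the basin; the $\bar g$-invariance of this finite set makes $Q$ consist of $A$-periodic points. Finally, the conjugacy itself is produced by the Franks--Manning shadowing argument adapted to DA-diffeomorphisms: define $h:\mathbb{R}^n\to\mathbb{R}^n$ by the limit $h(x)=\lim_{k\to\infty}A^{-k}\tilde g^k(x)$, use the hyperbolicity of $A$ and the leafwise contraction/expansion estimate \eqref{hyperbolic.metric} to show the limit exists, is continuous, equivariant under $\mathbb{Z}^n$, and carries $\tilde g$ to $A$. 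Uniqueness along unstable leaves then forces $h$ to be a homeomorphism off the lift of $Q$, yielding the conjugacy between $\bar g|_{\overline{W^s(\Lambda)}\setminus Q}$ and $f^A_Q|_{\mathbb{T}^n\setminus Q}$.

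For the non-orientable case, I would run the same machinery on the two-sheeted orientation cover $\pi:\overline{W^s(\Lambda)}\to W^s(\Lambda)$ supplied by Theorem~\ref{homotopy.torus}, for the lifted homeomorphism $\bar g$ commuting with the involution $\theta$. The argument above yields a conjugacy of $\bar g$ with a DA-diffeomorphism $f^A_Q$ on $\mathbb{T}^n$; the conjugacy is canonical enough (obtained from the Franks--Manning limit, which is natural with respect to covering homeomorphisms) that it intertwines $\theta$ with a linear involution of $\mathbb{T}^n$ fixing $Q$, giving the asserted equivariant statement. The main obstacle throughout is the first milestone, i.e.\ showing that $\overline{W^s(\Lambda)}$ is genuinely homeomorphic to $\mathbb{T}^n$ and that the extended fibrations behave like a pair of topologically transverse foliations on $\mathbb{T}^n$; this is where the codimension-one expanding assumption, the density of $W^u$, and the structure of the periodic boundary points are used in an essential way, and it is the step Plykin carries out by a careful local analysis of the neighborhoods of the exceptional leaves $\mathcal{L}_Q$.
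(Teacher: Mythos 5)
First, note that the paper does not prove Theorem~\ref{teo.plykin} at all: it is imported verbatim from Plykin \cite{plykin1,plykin2} (with the intermediate step recorded as Theorem~\ref{homotopy.torus}), so there is no in-paper argument to compare against. Judged on its own terms, your outline has the right general shape (compactify the basin, identify the compactification with $\mathbb{T}^n$, read off $A$ from the action on $\pi_1$, then conjugate to a DA model), but two of its load-bearing steps are wrong or missing as stated.

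The most concrete problem is the Franks--Manning step. The one-sided limit $h(x)=\lim_{k\to\infty}A^{-k}\tilde g^k(x)$ does not converge: $A^{-k}$ expands along the stable subspace of $A$, so the bounded displacement $\tilde g - A$ blows up in that direction; the actual construction must split the displacement along $E^s(A)\oplus E^u(A)$ and sum one series forward and one backward. More seriously, even the correctly built Franks--Manning map is only a semiconjugacy onto the \emph{linear} automorphism $A$, and for a DA-type attractor it is genuinely non-injective away from $Q$: it collapses each complementary arc of $\Lambda$ in the exceptional stable lines ${\mathcal L}_Q$ (the intervals with endpoints the paired boundary points $p^\pm$) to a single point. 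So ``uniqueness along unstable leaves forces $h$ to be a homeomorphism off the lift of $Q$'' is false, and this route cannot by itself produce the claimed conjugacy with $f^A_Q$; one must instead match the boundary-point and connecting-arc structure of $\Lambda$ with that of the model attractor of $f^A_Q$, which is precisely the ``careful local analysis'' you defer to Plykin. That deferral, together with the asserted but unproved ``first milestone'' (upgrading the homotopy equivalence $\overline{W^s(\Lambda)}\simeq\mathbb{T}^n$ to a homeomorphism from a pair of topologically transverse singular fibrations --- a step that is known to be delicate, e.g.\ the related global statement in \cite{GZ} is only a homotopy equivalence when $n=4$), means the essential content of the theorem is not actually established by the proposal. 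A smaller point: Theorem~\ref{homotopy.torus} is stated in this paper only for $n=3$, so invoking it for general $n$ requires the $n$-dimensional version from \cite{plykin1}.
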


Let us note that, in case where the manifold $M$ is a torus, this result
was obtained in \cite{GZ1979}, see also \cite{grines1977} for the
two-dimensional case.\par Next, we state some of the results
obtained in~\cite{GZ} and follow the general outline and notation.

Let $\Lambda$ be a codimension-one expanding attractor.  We will
assume for now that $\Lambda$ is orientable.  (The non-orientable
case will follow by taking a double cover and looking at the
orientable case). A point $p$ is a {\de boundary point} of a
codimension-one expanding attractor $\Lambda$ if there exists a
connected component of $W^s(p)- p$, denoted $W^s_{\emptyset}(p)$,
not intersecting $\Lambda$. Boundary points for hyperbolic
codimension-one expanding attractors are finite and periodic
\cite{plykin1}. For $z\in\Lambda$ and given points $x,y\in W^s(z)$
we denote $(x,y)^s$ ( respectively $[x,y]^s$) the open (closed) arc
of $W^s(z)$ with endpoints $x$ and $y$.  If $p$ is a boundary point
of $\Lambda$ and $x\in W^u(p)-p$, then there is a unique arc
$(x,y)^s_{\emptyset}$ such that $(x,y)^s\cap\Lambda=\emptyset$ and
$y\in\Lambda$.  If $z\in W^s(\Lambda)-\Lambda$, then either
$z\in(x,y)_{\emptyset}^s$ for some $x$ and $y$ elements of the
unstable manifolds of boundary points, or $z\in W^s_{\emptyset}(p)$
for some boundary point $p\in\Lambda$.

The boundary points $p_1$ and $p_2$ are called {\de associated} if
for each point $x\in W^u(p_1)$ there exists an arc
$(x,y)^s_{\emptyset}$ where $y\in W^u(p_2)$, and similarly for each
point $y\in W^u(p_2)$ there is an arc $(x,y)^s_{\emptyset}$ where
$x\in W^u(p_1)$. The boundary point $p_1$ is said to be {\de paired}
if there exists a boundary point $p_2$ such that $p_1$ and $p_2$ are
associated \footnote{This concept also appears as {\de 2- bunched}
in the bibliography. It is not to be confused with the concept of
{\it center bunching} used for partially hyperbolic
diffeomorphisms.}. Two associated boundary points have always the
same period $m$. If $\mathrm{dim}(M)\geq 3$, then all boundary
points are paired.

For associated periodic points $p_1$ and $p_2$ let
$$\varphi_{p_1,p_2}:(W^u(p_1)-p_1)\cup(W^u(p_2)-p_2)\rightarrow (W^u(p_1)-p_1)\cup(W^u(p_2)-p_2)$$ be defined by $\varphi_{p_1,p_2}(x)=y$ whenever $(x,y)_{\emptyset}^s$.  The continuous dependence of stable and unstable manifolds implies that $\varphi_{p_1,p_2}$ is a homeomorphism.  We may naturally extend $\varphi_{p_1,p_2}$ to be a homeomorphism  of $W^u(p_1)\cup W^u(p_2)$ to itself by defining $\varphi_{p_1,p_2}(p_1)=p_2$ and $\varphi_{p_1,p_2}(p_2)=p_1$.

Fix $D_{p_1}$ a closed disk in $W^u(p_1)$ containing $p_1$ in the interior such that $D_{p_1}\subset\mathrm{int}(f^m(D_{p_1}))$.  The boundary of $D_{p_1}$ is a circle denoted $S_{p_1}$.  The circles $S_{p_1}$ and $f^m(S_{p_1})$ bound an annulus contained in $W^u(p_1)$ denoted $A_{p_1}$.

Since $\varphi_{p_1,p_2}$ is a homeomorphism we can define
\begin{itemize}
\item a closed disk $D_{p_2}=\varphi_{p_1,p_2}(D_{p_1})$ in $W^u(p_2)$,
\item a circle $S_{p_2}=\varphi_{p_1,p_2}(S_{p_1})$, and
\item an annulus $A_{p_2}=\varphi_{p_1,p_2}(A_{p_1})$.
\end{itemize}
The set
$$C_{p_1,p_2}=\bigcup_{x\in S_{p_1}}(x,\varphi_{p_1,p_2}(x))_{\emptyset}^s$$ is called a {\de connecting cylinder of $p_1$ and $p_2$} and is homeomorphic to the open $2$-cylinder $S^1\times(0,1)$.  The set
$$S_{p_1,p_2}=D_{p_1}\cup D_{p_2}\cup C_{p_1,p_2}$$ is called a {\de characteristic sphere for $p_1$ and $p_2$} and is homeomorphic to a sphere.

Define
$$A_{p_1,p_2}=\bigcup_{x\in A_{p_1}}[x,\varphi_{p_1,p_2}(x)]^s_{\emptyset}$$ which is homeomorphic to an annulus times an interval.  Let
$$D_{p_1,p_2}=\bigcup_{j\geq 0}f^{jm}(A_{p_1,p_2})=\bigcup_{x\in W^u(p_1)-\mathrm{int}D_{p_1}}[x,\varphi_{p_1,p_2}(x)]^s_{\emptyset}$$ and denote $\pi_{p_1}$ as the projection from $D_{p_1,p_2}$ to $W^u(p_1)-\mathrm{int}(D_{p_1})$.  Then the triple $(D_{p_1,p_2},W^u(p_1)-\mathrm{int}(D_{p_1}),\pi_{p_1})$ is a trivial fiber bundle with fiber the interval $[0,1]$.

The following is Corollary 3.1 in~\cite{GZ}.

\begin{lema}\label{l.intersect} Let $\Lambda_0$ be a codimension-one orientable expanding attractor and $p_1,p_2$
are associated boundary points on $\Lambda_0$.  Suppose $\Lambda$ is
another basic set of $M$ for $f$ with $\st(\Lambda)=1$.  If there
exists a point $z\in\Lambda$ such that $W^u(z)\cap
D_{p_1,p_2}\neq\emptyset$, then $W^u(z)$ intersects $C_{p_1,p_2}$.
\end{lema}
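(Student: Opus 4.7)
The plan is to argue by contradiction. Assume $W^u(z)\cap C_{p_1,p_2}=\emptyset$, and set $K=W^u(z)\cap D_{p_1,p_2}$, which is nonempty by hypothesis. The strategy is to show that $K$ is both open and a proper subset of the connected $2$-manifold $W^u(z)$, and then produce an intrinsic boundary point of $K$ that cannot exist. The foundation is two disjointness observations, both consequences of $\Lambda$ and $\Lambda_0$ being distinct (hence disjoint) compact basic sets: if $q\in\Lambda\cap W^s(\Lambda_0)$, the forward orbit of $q$ lies in $\Lambda$ yet converges to $\Lambda_0$, forcing $\Lambda\cap\Lambda_0\ne\emptyset$, so $\Lambda\cap W^s(\Lambda_0)=\emptyset$; symmetrically $W^u(z)\cap\Lambda_0=\emptyset$. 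Every point of $D_{p_1,p_2}$ sits on a closed stable arc $[x,\varphi_{p_1,p_2}(x)]^s_\emptyset$ with open interior in $W^s(\Lambda_0)\setminus\Lambda_0$ and endpoints in $W^u(p_1)\cup W^u(p_2)\subset\Lambda_0$. The first observation gives $z\notin D_{p_1,p_2}$, hence $K\ne W^u(z)$. The second gives $W^u(z)\cap W^u(p_i)=\emptyset$ and $W^u(z)\cap S_{p_i}=\emptyset$ for $i=1,2$, so combining with the standing hypothesis, $W^u(z)$ avoids every component of $\partial D_{p_1,p_2}$, and thus $K\subset\mathrm{int}(D_{p_1,p_2})$ is open in $W^u(z)$.

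Since $W^u(z)$ is connected, $K$ has a nonempty intrinsic boundary. Pick $y$ in this boundary and a sequence $y_n\in K$ with $y_n\to y$, writing $y_n\in[x_n,\varphi_{p_1,p_2}(x_n)]^s_\emptyset$ with $x_n\in W^u(p_1)-\mathrm{int}(D_{p_1})$. By compactness of $M$, pass to a subsequence so that $x_n\to x_\infty\in M$. If $x_\infty\in W^u(p_1)$, continuity of the stable foliation together with continuity of the homeomorphism $\varphi_{p_1,p_2}$ gives convergence of the stable arcs, so $y\in[x_\infty,\varphi_{p_1,p_2}(x_\infty)]^s_\emptyset$; depending on whether $x_\infty$ lies in the interior of $W^u(p_1)-D_{p_1}$ or on $S_{p_1}$, this places $y$ either in $\mathrm{int}(D_{p_1,p_2})$ (so $y\in K$, contradicting its choice) or on $C_{p_1,p_2}\cup S_{p_1}\cup S_{p_2}$, each option contradicting one of the established facts. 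If $x_\infty\notin W^u(p_1)$, then since each shell $f^{jm}(A_{p_1})$ is compact in $W^u(p_1)$, one necessarily has $x_n\in f^{j_n m}(A_{p_1})$ with $j_n\to\infty$; chaining the contraction estimate (\ref{hyperbolic.metric}) along the stable arcs in the compact set $A_{p_1,p_2}$ shows that the length of $[x_n,\varphi_{p_1,p_2}(x_n)]^s_\emptyset=f^{j_n m}([x_n^0,\varphi_{p_1,p_2}(x_n^0)]^s_\emptyset)$ tends to $0$, whence $y_n$ stays within this vanishing distance of $x_n$, so $y=x_\infty\in\overline{W^u(p_1)}\subset\Lambda_0$, contradicting $W^u(z)\cap\Lambda_0=\emptyset$.

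The main technical point to handle carefully is the escape case, which requires a uniform upper bound on stable-arc lengths over the compact innermost shell $A_{p_1,p_2}$ together with the observation that iterating $f^m$ preserves the empty-arc condition while contracting stable length exponentially, so that the hyperbolic estimate (\ref{hyperbolic.metric}) can be chained across the iterates $j_n\to\infty$. A subsidiary concern is keeping track of intrinsic versus ambient topology when producing the boundary point $y$ on the injectively immersed leaf $W^u(z)$, but this is routine once $K$ is shown to lie in $\mathrm{int}(D_{p_1,p_2})$ and hence to be open in $W^u(z)$.
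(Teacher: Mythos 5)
The paper offers no proof of this lemma: it is quoted as Corollary 3.1 of Grines--Zhuzhoma \cite{GZ}, so there is no in-paper argument to compare against and your proof must stand on its own. It essentially does, and it is the natural argument. The two disjointness facts are correct consequences of $\Lambda$ and $\Lambda_0$ being distinct compact basic sets: $\Lambda\cap W^s(\Lambda_0)=\emptyset$ gives $z\notin D_{p_1,p_2}$, hence $K=W^u(z)\cap D_{p_1,p_2}$ is proper, and $W^u(z)\cap\Lambda_0=\emptyset$ makes $W^u(z)$ miss the two boundary components of $D_{p_1,p_2}$ lying in $W^u(p_1)\cup W^u(p_2)\subset\Lambda_0$; together with the contradiction hypothesis on $C_{p_1,p_2}$ this puts $K$ in the manifold interior of $D_{p_1,p_2}$, which is open in $M$ by the fiber-bundle structure plus invariance of domain, so $K$ is open in the leaf. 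Connectedness of $W^u(z)$ then forces a nonempty frontier, which your limit analysis must exclude; this is a clopen-set argument in disguise and it works.

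One step is stated imprecisely, although the repair is already contained in your own second case. Your dichotomy is on whether the ambient limit $x_\infty=\lim x_n$ lies in $W^u(p_1)$. But $W^u(p_1)$ accumulates on itself inside $\Lambda_0$, so convergence $x_n\to x_\infty$ in $M$ with $x_\infty\in W^u(p_1)$ does not imply convergence in the intrinsic topology of the leaf: the $x_n$ may satisfy $x_n\in f^{j_nm}(A_{p_1})$ with $j_n\to\infty$ while returning ambiently close to a point of $W^u(p_1)$. In that situation the arcs $[x_n,\varphi_{p_1,p_2}(x_n)]^s_\emptyset$ do \emph{not} converge to $[x_\infty,\varphi_{p_1,p_2}(x_\infty)]^s_\emptyset$, since continuity of the stable foliation and of $\varphi_{p_1,p_2}$ are statements in the leaf topology; so your first case contains a false implication as written. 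The correct dichotomy is on whether the shell indices $j_n$ stay bounded (then a subsequence of $(x_n)$ converges intrinsically inside a fixed compact shell and your arc-convergence argument places $y$ in $K$, in $C_{p_1,p_2}$, or in $\Lambda_0$, each a contradiction) or $j_n\to\infty$ (then your shrinking-arc estimate applies verbatim and $y\in\overline{W^u(p_1)}\subset\Lambda_0$, again a contradiction). Since both mechanisms appear in your write-up, this is a misplaced case split rather than a missing idea, but it should be corrected before the proof can be accepted.
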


Theorem 6.1 in~\cite{GZ} is similar to the following lemma.

\begin{lema} Suppose $f$ is a quasi-Anosov diffeomorphism of a closed 3-manifold $M$ and $\Lambda_0$ is an orientable codimension-one expanding
attractor of $f$.  Let $\Lambda\neq\Lambda_0$ be a basic set of $\mathrm{st}(\Lambda)=1$ such that $W^u(\Lambda)\cap D_{p_1,p_2}\neq\emptyset$.  Let $C\subset D_{p_1,p_2}\cap W^u(z)$ be a component of the intersection of $D_{p_1,p_2}\cap W^u(z)$, where $z\in\Lambda$ is a periodic point.  Then $W^u(z)\cap C_{p_1,p_2}=C\cap C_{p_1,p_2}\neq\emptyset$ and this intersection consists of a unique circle, $S$, that is isotopic to $S_{p_1}$ and $S_{p_2}$.
\end{lema}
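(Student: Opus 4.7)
The plan is to exploit quasi-transversality --- which, since $\st(\Lambda_0)=\st(\Lambda)=1$ in $\dim M=3$, upgrades to honest transversality on all relevant pairs of leaves --- together with the trivial fiber-bundle structure of $D_{p_1,p_2}$ over $W^u(p_1)-\mathrm{int}(D_{p_1})$ and the hyperbolicity of the periodic point $z$.

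First I would set up the local picture. By quasi-transversality, at any intersection point of $W^u(z)$ with a stable leaf of $\Lambda_0$ one has $T_xW^u(z)\oplus T_xW^s(w)=T_xM$, so $W^u(z)$ meets each stable fiber $[x,\varphi_{p_1,p_2}(x)]^s_\emptyset$ of the bundle $D_{p_1,p_2}\to W^u(p_1)-\mathrm{int}(D_{p_1})$ transversally and discretely. Consequently $W^u(z)$ is transverse to the cylinder $C_{p_1,p_2}$, and for any component $C$ of $W^u(z)\cap D_{p_1,p_2}$ the projection $\pi_{p_1}|_C$ is a local homeomorphism. Since $W^u(z)$ is disjoint from $W^u(p_1)\cup W^u(p_2)$ (distinct unstable manifolds are disjoint), the intersection $W^u(z)\cap C_{p_1,p_2}$ avoids $\partial C_{p_1,p_2}\subset S_{p_1}\cup S_{p_2}$, so every component of $W^u(z)\cap C_{p_1,p_2}$ is a circle in the open cylinder $C_{p_1,p_2}\cong S^1\times(0,1)$, and is therefore either null-homotopic or isotopic to $S_{p_1}$. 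By Lemma~\ref{l.intersect} at least one such component exists.

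Second, I would show that $\pi_{p_1}|_C$ is a homeomorphism onto the whole base $W^u(p_1)-\mathrm{int}(D_{p_1})$. Non-compactness of $C$ is automatic, since the open map $\pi_{p_1}|_C$ would otherwise send $C$ to a clopen subset of the non-compact connected base. Injectivity would follow dynamically: two points of $C$ on a common fiber give two points of $W^u(z)$ on a single stable arc, of length bounded below; forward iteration by $f^{jm}$ (with $m$ the common period of $p_1,p_2$) contracts this distance to zero while both images remain in $W^u(z)\cap W^s(\Lambda_0)$, and this --- together with forward invariance of $W^u(z)$ under $f^k$ ($k$ the period of $z$), the $\lambda$-lemma, and the separation of the distinct basic sets $\Lambda\ni z$ and $\Lambda_0$ --- leads to a contradiction. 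Properness is then obtained from the exhaustion $D_{p_1,p_2}=\bigcup_{j\ge 0}f^{jm}(A_{p_1,p_2})$. Together these make $\pi_{p_1}|_C$ a proper injective local homeomorphism, hence a homeomorphism onto a clopen connected subset of the base, which must be the entire base. It follows that $C\cong S^1\times[0,\infty)$ and $\partial C=C\cap C_{p_1,p_2}$ is a single essential circle, hence isotopic to $S_{p_1}$ and (via $\varphi_{p_1,p_2}$) to $S_{p_2}$.

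Finally, the uniqueness claim $W^u(z)\cap C_{p_1,p_2}=C\cap C_{p_1,p_2}$ drops out: a second component $C'$ of $W^u(z)\cap D_{p_1,p_2}$ meeting $C_{p_1,p_2}$ would, by the same argument, project homeomorphically onto the same base via $\pi_{p_1}$, placing two distinct points of $W^u(z)$ on every stable fiber of $D_{p_1,p_2}$ and contradicting the injectivity just proved. The main obstacle is precisely this injectivity: transversality delivers the local version for free, but the global statement truly invokes the quasi-Anosov dynamics, namely the interplay between forward contraction of stable fibers, backward expansion along $W^u(z)$, and the disjointness of the basic sets $\Lambda$ and $\Lambda_0$, which is what prevents $W^u(z)$ from hitting the same stable arc of $D_{p_1,p_2}$ twice. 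This is the place where the proof of Theorem~6.1 of~\cite{GZ}, written for structurally stable diffeomorphisms under strong transversality, needs substantive adaptation to the quasi-Anosov setting.
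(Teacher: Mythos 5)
Your overall strategy diverges from the paper's at the decisive step, and the divergence is where the gap lies. The paper (following Theorem 6.1 of \cite{GZ}) argues as follows: existence of a component via Lemma~\ref{l.intersect}; transversality (which, as you correctly note, is what quasi-transversality yields here since $\st(\Lambda_0)=\st(\Lambda)=1$) to show each component of $W^u(z)\cap C_{p_1,p_2}$ is a circle $S$ isotopic to $S_{p_1}$; then a \emph{minimal disk} $B_S\subset W^u(z)$ bounded by $S$, necessarily disjoint from $D_{p_1,p_2}$, with a case analysis on whether some $B_S$ contains $z$; the case ``no $B_S$ contains $z$'' is excluded, and in the remaining case $f(C_{p_1,p_2})\subset D_{p_1,p_2}$ forces $S$ and $f(S)$ to bound an annulus in $W^u(z)$ that is a fundamental domain contained in $D_{p_1,p_2}$, which yields uniqueness. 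You instead try to prove that $\pi_{p_1}|_C$ is a homeomorphism onto the whole base $W^u(p_1)-\mathrm{int}(D_{p_1})$, i.e.\ that $W^u(z)$ meets each stable fiber $[x,\varphi_{p_1,p_2}(x)]^s_{\emptyset}$ at most once, and to derive everything from that. That statement is \emph{stronger} than the lemma's conclusion (an essential circle in the cylinder need not meet each fiber once), and in \cite{GZ} the per-fiber uniqueness is harvested \emph{after} the annulus/fundamental-domain structure is in place, not used to establish it.

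The concrete gap is your injectivity argument. You take two points $a\neq b$ of $W^u(z)$ on one fiber, observe that $d_s(f^{jm}(a),f^{jm}(b))\to 0$ with both iterates remaining in $W^u(\mathrm{orb}(z))\cap W^s(\Lambda_0)$, and assert that this ``leads to a contradiction'' via the $\lambda$-lemma and the disjointness of $\Lambda$ and $\Lambda_0$. But no contradiction is visible: two distinct points of the (injectively immersed, non-closed) surface $W^u(z)$ lying on a single stable leaf of $\Lambda_0$ and converging to each other under forward iteration is an entirely consistent configuration --- $W^u(z)$ accumulates on $\Lambda_0$, so nearby returns to the same stable leaf are exactly what one expects, and disjointness of the basic sets says nothing about it. Without this step your surjectivity, the identification $C\cong S^1\times[0,\infty)$, the essentiality of $\partial C$, and the uniqueness claim all collapse. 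A secondary weak point: you infer that components of $W^u(z)\cap C_{p_1,p_2}$ are (compact) circles from $W^u(z)\cap\bigl(W^u(p_1)\cup W^u(p_2)\bigr)=\emptyset$, but set-theoretic disjointness does not prevent a component from being a non-compact arc accumulating on $S_{p_1}\subset\overline{W^u(z)}$; this closedness also needs an argument (it is handled in \cite{GZ}). Your identification of where quasi-transversality substitutes for strong transversality is correct and matches the paper, but the body of the proof would need to be replaced by the minimal-disk and fundamental-domain argument.
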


We will provide an outline of the proof, (see also ~\cite{GZ}),
since some of the details are needed in the proof of
Proposition~\ref{p.basic}. The statement of Theorem 6.1 in~\cite{GZ}
assumes the diffeomorphism is structurally stable.  Structurally
stable diffeomorphisms have the strong transversality property which
implies that for all $x\in\Lambda_0$ and $z\in\Lambda$ that
\begin{equation}\label{e.transverse}
\begin{array}{llll}
W^s(x)\cap W^u(z)=W^s(x)\pitchfork W^u(z),\textrm{ and }\\
W^u(x)\cap W^s(z)=W^u(x)\pitchfork W^s(z).
\end{array}
\end{equation}
This is the property used in the proof in~\cite{GZ}.  However, if $f$ is quasi-Anosov and $\Lambda_0$ and $\Lambda$ are codimension-one, then for all $x\in\Lambda_0$ and $z\in\Lambda$ property
(\ref{e.transverse}) holds
by the quasi-transversal property of quasi-Anosov diffeomorphisms.

First, we use Lemma~\ref{l.intersect} to show there is a component.  Next, we use transversality to show that every component $C\cap C_{p_1,p_2}$ is a circle, $S$, that is isotopic to $S_{p_1}$ and $S_{p_2}$.  So in fact each component divides $C_{p_1,p_2}$ into two cylinders.

Next, let $B_S\subset W^u(z)$ be a minimal disk bounded by $S$.  Since $B_S$ is minimal it follows that $B_S\cap D_{p_1,p_2}=\emptyset$.  Then there are two possibilities.
\begin{enumerate}
\item No $B_S$ contains $z$.
\item Some $B_S$ contains $z$.
\end{enumerate}
It is shown that case (1) can not occur.  For case (2) since $f(C_{p_1,p_2})\subset D_{p_1,p_2}$ we know $f(S)\cap S=\emptyset$ and $S$
is inside $f(S)$ in $W^u(z)$.  It then follows that $S$ and $f(S)$ bound a closed annulus in $W^u(z)$ which is a fundamental domain
of $W^u(z)$ contained in $D_{p_1,p_2}$.  Thus the intersection of $W^u(z)$ and $C_{p_1,p_2}$ is a unique circle.

\noindent{\bf Proof of Proposition~\ref{p.basic}.} To simplify the
argument we first assume the attractors are orientable.  Let us
suppose that $\Lambda$ is a basic set that is a codimension-one and
$\Lambda$ is not a hyperbolic attractor.  Since periodic points are
dense in $\Lambda$ and $\Lambda_0$ we may assume there exist
periodic points $x\in\Lambda$ and $x_0\in\Lambda_0$ such that
$W^s(x_0)\cap W^u(x)\neq\emptyset$.  Let $y\in W^s(x_0)\cap W^u(x)$.
Then from the previous Lemma $y\in (y_1,y_2)^s_{\emptyset}$ for
$y_1$ and $y_2$ in the unstable manifolds of associated boundary
points $p_1$ and $p_2$, respectively.\par
Let $S_{p_1,p_2}$ be a characteristic sphere for $p_1$ and $p_2$
such that $(y_1,y_2)^s_{\emptyset}\subset C_{p_1,p_2}$, so
$C_{p_1,p_2}\cap W^u(x)\neq\emptyset$. From the previous lemma we
know that $W^u(x)\cap D_{p_1,p_2}$ is a unique component $C$.
Furthermore, there is a fundamental domain of $W^u(x)$ contained in
$D_{p_1,p_2}\subset W^s(\Lambda_0)$.  The invariance of
$W^s(\Lambda)$ implies that $W^u(x)-x\subset W^s(\Lambda_0)$. Hence,
$(W^u(x)-x)\cap \Lambda=\emptyset$.  Since $W^u(x)\cap W^s(x)$ is
dense in a component of $\Lambda$, given by the Spectral
Decomposition Theorem, we know that $\Lambda$ is trivial and
consists of the orbit of $x$.
%
%
In this way, the result follows for orientable attractors
$\Lambda_0$. \par
We now suppose that $\Lambda$ is a codimension-one
basic set and $\Lambda_0$ is a codimension-one {\em non-orientable}
attractor where $\Lambda\rightarrow\Lambda_0$.  This implies that
$M$ is non-orientable from~\cite{plykin2}.  Let $\bar{M}$ be an
orientable manifold and $\pi:\bar{M}\rightarrow M$ is a
(non-branched) double covering of $M$.  Then there exists a
diffeomorphism $\bar{f}$ of $\bar{M}$ that covers $f$.  Furthermore,
$\bar{M}$ contains a hyperbolic orientable codimension-one expanding
attractor $\bar{\Lambda}_0$ such that
$\bar{\Lambda}_0\subset\pi^{-1}(\Lambda_0)$.  The result now follows
from the previous argument by lifting $\Lambda$. $\Box$


\section{Proof of Theorem
\ref{thm.kneser.decomposition}}\label{section thm1.1}
Let us recall some basic definitions and results, which can be found
in \cite{milnor}. The {\de connected sum} of two 3-manifolds is
obtained by removing the interior of a 3-cell from each 3-manifold,
and then matching the resulting boundaries, using an orientation
reversing homeomorphism. The connected sum of $M$ and $M'$ is
denoted $M\#M'$. In order to {\de add a handle} to a connected
3-manifold $M$, one removes the interior of two disjoint 3-cells
from $M$, and matches the resulting boundaries under an orientation
reversing homeomorphism. If one adds a handle to $M$, one obtains a
manifold isomorphic to $M\#S^2\times S^1$. Note that $M\#S^3=M$.\par
A manifold $M\ne S^3$ is {\de prime} if $M=M_1\#M_2$ implies
$M_1=S^3$ or $M_2=S^3$. We have the following Unique Decomposition
Theorem (see also \cite{kneser}):
\begin{teo}[Milnor \cite{milnor}] Every 3-manifold $M\ne S^3$ can be
written as a finite connected sum:
$$M=M_1\#\dots\# M_k$$
where each $M_i$ is prime, $i=1,\dots, k$, and is unique up to order
and isomorphisms.
\end{teo}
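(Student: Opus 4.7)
The plan is to prove the theorem in two parts: existence of a prime decomposition, and its uniqueness up to order and isomorphism.

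For existence, I would follow Kneser's original approach. Define a \emph{sphere system} in $M$ to be a finite collection of pairwise disjoint embedded 2-spheres, no one of which bounds a 3-ball and no two of which co-bound a region diffeomorphic to $S^{2}\times I$. The point is to show that any such system has cardinality bounded by a constant depending only on a fixed triangulation of $M$. Concretely, I would fix a triangulation $\tau$ of $M$ with $t$ tetrahedra, put every sphere of the system into normal form with respect to $\tau$ by an isotopy (so that it meets each tetrahedron in a disjoint union of triangles and quadrilaterals), and then use a pigeonhole/parity argument on the finitely many normal isotopy types of discs per tetrahedron to produce, from any sufficiently large system, two parallel spheres co-bounding an $S^{2}\times I$, contradicting the definition. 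This bound forces any iterative splitting $M = M_{1}\#M_{2}$, then further splitting of $M_{1}$ and $M_{2}$, to terminate after finitely many steps, producing the required prime decomposition.

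For uniqueness, I would follow Milnor's innermost-disc argument. Suppose $M = M_{1}\#\dots\#M_{k} = M_{1}'\#\dots\#M_{l}'$ are two prime decompositions realised by sphere systems $\Sigma$ and $\Sigma'$, respectively. After a small perturbation I may assume $\Sigma$ and $\Sigma'$ meet transversally, so $\Sigma\cap\Sigma'$ consists of finitely many disjoint circles. I proceed by induction on $|\Sigma\cap\Sigma'|$. Pick an innermost circle $C$ on some $S'\in\Sigma'$: it bounds a disc $D'\subset S'$ whose interior is disjoint from $\Sigma$. The curve $C$ also bounds two discs on the containing sphere $S\in\Sigma$; choose one, say $D$. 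The 2-sphere $D\cup D'$ lies in a piece of the decomposition by $\Sigma'$, which is (after capping with a 3-ball) a summand $M_{j}'$ with $S^{3}$ removed. Primality of $M_{j}'$ forces $D\cup D'$ to bound a 3-ball $B$ in that piece; then an isotopy of $S$ across $B$ removes the circle $C$ (and possibly others) from $\Sigma\cap\Sigma'$, reducing the intersection number. Iterating, one arrives at $\Sigma\cap\Sigma' = \emptyset$. A final step shows that two disjoint prime sphere systems can be isotoped onto each other summand-by-summand, which yields the bijection between the two sets of prime factors together with homeomorphisms between paired summands.

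The main obstacle is the innermost-disc reduction step in the uniqueness proof: when one produces the sphere $D\cup D'$, it is genuinely essential to invoke primality of the ambient summand to guarantee that this sphere bounds a ball, and to do so one must carefully control in which component of $M\setminus\Sigma'$ the disc $D$ lies. A second delicate point, on the existence side, is the normal-form lemma: one must argue that an isotopy replacing a sphere by its normal representative does not decrease the size of a maximal system, and that parallel normal spheres really do co-bound an $S^{2}\times I$. Both of these are PL arguments with many cases but no conceptual surprises, and together they yield the decomposition and its uniqueness, completing the proof.
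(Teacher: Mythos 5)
This is a quoted classical result: the paper gives no proof at all, only the citations to Kneser and Milnor, so there is no internal argument to compare against; your outline is the standard Kneser--Milnor proof from exactly those sources, and the existence half (normal surfaces, the bound on systems of disjoint, non-parallel, essential spheres, termination of the splitting process) is the right argument in the right order.

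There is, however, a concrete gap in your uniqueness step, and it concerns precisely the summands this paper cares about. You write that ``primality of $M_j'$ forces $D\cup D'$ to bound a 3-ball.'' Prime does not imply irreducible: $S^2\times S^1$ is prime, yet contains a non-separating essential sphere bounding no ball, and the handles $H_i=S^2\times S^1$ occur in the very decomposition the paper asserts. Primality only guarantees that a \emph{separating} sphere bounds a ball on one side; if the piece in question is a punctured $S^2\times S^1$, the sphere $D\cup D'$ may be non-separating and your isotopy does not exist. The standard repair is to treat the two kinds of prime summands differently: run the innermost-disc/isotopy argument to match up the irreducible summands, and count the $S^2\times S^1$ factors by a separate invariant (e.g.\ the free rank of $\pi_1$ via Grushko, or equivalently the maximal number of independent non-separating spheres). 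A second, smaller issue you flag but do not resolve: even when $D\cup D'$ does bound a ball $B$ in the capped-off piece, $B$ may contain some of the capping balls, i.e.\ other spheres of $\Sigma$, so the isotopy of $S$ across $B$ is obstructed; one must choose the disc $D$ (or the innermost circle) so that this does not happen. Finally, note that uniqueness as literally stated fails for non-orientable manifolds, since $P\#(S^2\times S^1)\cong P\#(S^2\widetilde{\times}S^1)$ for $P$ non-orientable; Milnor's theorem is for orientable $M$, and any complete writeup must either restrict to that case or adjust the statement.
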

Note that the handles $S^2\times S^1$ are prime manifolds. The torus
${\mathbb T}^3$ is also prime.\par Now, let us prove Theorem
\ref{thm.kneser.decomposition}. Let $M$ be a hyperbolic invariant
set for a diffeomorphism $f$ such that $M$ is a 3-manifold. Then
$f|M$ is a quasi-Anosov diffeomorphism \cite{manhe}. Let us first
assume that $M$ is orientable. Theorem \ref{thm.medvedev.zhuzhoma}
implies that all attractors and repellers of $f|M$ are orientable.
Then Theorems \ref{homotopy.torus} and \ref{teo.plykin} imply that
the basin of attraction/repulsion of each attractor/repeller is
homeomorphic to a finitely punctured torus. Also, that $f$
restricted to each basin is topologically equivalent to a
DA-diffeomorphism. Let us first consider the simplest case of a non-Anosov quasi-Anosov diffeomorphism: one with just one attractor and
one repeller. Let us furthermore suppose that the basin of
attraction of the attractor is homeomorphic to a torus minus one
point. If one takes a ball centered at that (repelling) point, and
cuts the pre-image of that ball under the DA-diffeomorphism, one
obtains a fundamental domain, which is homeomorphic, under the
conjugacy, to a fundamental domain $D$ of $f|M$, met just once by
the orbit of every point not in the attractor nor in the repeller of
$f|M$. By connectedness, the basin of repulsion of the repeller must
be also homeomorphic to a torus minus one (attracting) point $A$.
And $f$ restricted to this basin is also conjugated to another
DA-diffeomorphism. The image of $D$ under this second conjugacy
consists of two spheres $S^3$, each bounding a ball containing $A$
in its interior. So, the fact that $D$ is a fundamental domain
implies that $M$ is the connected sum of two tori, just like in the
Franks-Robinson example \cite{frob}.\par Let us suppose now that
we have an attractor and a repeller, but that the basin of
attraction is homeomorphic to a torus minus $k$ points, with $k\geq
2$. One can assume that the $k$ points
are fixed under the DA-diffeomorphism by taking a sufficiently high iterate of the diffeomorphism. A connectedness argument
shows that the basin of repulsion of the repeller is also
homeomorphic to a torus minus $k$ points. Now, the previous
procedure shows that $M$ is obtained by removing $k$ 3-cells from
each torus and matching the resulting boundaries, using an
orientation reversing homeomorphism. Observe that this implies that
$$M=T_1\# T_2\# H_1\#\dots\# H_{k-1}$$
where $T_1,T_2$ are tori and the $H_i$ are handles $S^2\times S^1$.
Indeed, instead of removing simultaneously the $k$ 3-cells of each
torus, one can only remove one 3 cell from each torus and glue along
their boundaries by $f$, which reverses orientation. In this way one
obtains the connected sum of two tori. The rest of the procedure
consists in repeating $(k-1)$ times the operation: cutting two
disjoint 3-cells of this connected sum and matching the resulting
boundaries by a reversing orientation homeomorphism. This is the
same as adding a handle.\par
Now, in fact there is nothing special in having just one attractor
and one repeller. In case there are more attractors or repellers,
one proceeds inductively as in the previous cases until one obtains
a finite connected sum of tori and handles.\par
Let us also consider the case where the non-wandering set has a
basic set $\Lambda$ that is a periodic orbit, we may assume
$\st(\Lambda)=\st(\Lambda_0)$, and that $\Lambda\to\Lambda_0$,
where $\Lambda_0$ is a connected codimension-one attractor.  Let
$p$ and $q$ be associated boundary periodic points and
$P_1,...,P_k$ be the set of all periodic points with
$\mathrm{st}(P_i)=1$ and $$W^u(P_i)\cap \bigcup_{x\in
W^u(p)-\{p\}}[x,\phi_{p,q}(x)]^s_{\emptyset}\neq\emptyset$$ for
all $1\leq i\leq k$.  Let $x\in W^u(p)-\{p\}$ and
$$\theta_x:[x,\phi_{p,q}(x)]^s_{\emptyset}\rightarrow [0,1]$$ be a
homeomorphism.  Denote $P_i^x$ to be the point of intersection
between $[x,\phi_{p,q}(x)]^s_{\emptyset}$ and $W^u(P_i)$.  By
reordering the points, if necessary, assume that
$\theta_x(P_i^x)<\theta_x(P_j^x)$ for $i<j$.  Then for any other
$x'\in W^u(x')$ and $\theta_{x'}$ defined similarly we have
$\theta_{x'}(P_i^{x'})<\theta_{x'}(P_j^{x'})$ for $i<j$ since
$W^u(P_i)$ is codimension-one.

For the rest of the construction we assume, with no loss of
generality, that $p,q, P_1,...,P_k$ are all fixed points. Define
the set
$$D=\bigcup_{x\in W^u(p)-\{p\}}[x,\phi_{p,q}(x)]^s_{\emptyset}\cup
W^s_{\emptyset}(p)\cup W^s_{\emptyset}(q)\cup W^s(P_1)\cup \cdots
\cup W^s(P_k).$$

Following the construction in the proof of Theorem 1 of
~\cite{plykin1} we can extend the diffeomorphism $f|_D$ to a
homeomorphism $\bar{f}$ on the compactification $\bar{D}=D\cup
\alpha_1\cup \cdots \alpha_{k+1}$ where each $\alpha_i$ is a
repelling fixed point for $\bar{f}$. Fix $\epsilon>0$ sufficiently
small and let
$$B=\bigcup_{x\in W^u_{\epsilon}(p)-\{p\}}[x,\phi_{p,q}(x)]^s_{\emptyset}\cup
W^s_{\emptyset}(p)\cup W^s_{\emptyset}(q)\cup (\bigcup_{i=1}^k
W^s(P_i))\cup (\bigcup_{i=1}^{k+1}\alpha_i).$$  Let
$B_{\epsilon}(0)$ be the ball of size $\epsilon$ centered at the
origin in $\mathbb{R}^{n-1}$. Then we can define a homeomorphism
$F: B_{\epsilon}(0)\times[0,1]\rightarrow B$ so that
\begin{itemize}
\item $F(0,0)=p$,
\item $F(0,1)=q$,
\item $F(0,\frac{2i}{2k+1})=P_i$ for $1\leq i\leq k$,
\item $F(0,\frac{2i+1}{2k+1})=\alpha_{i+1}$ for $0\leq i\leq k$,
\item $F(x,t)\in [F(x,0),\phi_{p,q}(F(x,0))]^s_{\emptyset}$, and
\item $F(x,0)\in W^u_{\epsilon}(p)$.
\end{itemize}
Furthermore, we can extend the unstable manifolds of the points in
$p\cup P_1\cup \cdots \cup P_k$ to a codimension-one fibration of
$\bar{D}$ and extend the stable foliation to a fibration of
$\bar{D}$ with one-dimensional fibers.  Similar to Corollary 7.2
in~\cite{GZ}, there exists a compact arc $a_{pq}\subset\bar{D}$
with no self intersections such that
$$a_{pq}=p\cup W^s_{\emptyset}(p)\cup \alpha_1 \cup
W^s(P_1)\cup \alpha_2 \cup \cdots \cup P_k\cup \alpha_{k+1}\cup
W^s_{\emptyset}(q)\cup q.$$

Let $\mathcal{P}$ be the set of saddle periodic points of stable
index one that intersect $W^s(\Lambda)$.  Following the above
construction we compactify $W^s(\Lambda)\cup W^s(\mathcal{P})$ to
a set $\overline{W^s(\Lambda)}$ and extend the diffeomorphism $f$
on $W^s(\Lambda)\cup W^s(\mathcal{P})$ to a homeomorphism
$\bar{f}$ of $\overline{W^s(\Lambda)}$.  Where
$\overline{W^s(\Lambda)}=W^s(\Lambda)\cup W^s(\mathcal{P})\cup
\mathcal{A}$ where $\mathcal{A}$ consists of a set of repelling
periodic points for $\bar{f}$.  The proof of Theorem 7.1
in~\cite{GZ} extends to $\overline{W^s(\Lambda)}$ to show that
$\overline{W^s(\Lambda)}$ is homeomorphic to $\mathbb{T}^3$.

In the case where there is at least one non-orientable attractor
or repeller, then $M$ is non-orientable. Theorem \ref{teo.plykin}
implies that the basin of attraction or repulsion of this
attractor or repeller is homeomorphic to a torus quotiented by an
involution minus $k$ points. $f$ is doubly covered by a
DA-diffeomorphism in this set, and the procedure of removing cells
and matching the corresponding boundaries follows as in the
previous cases, whence one obtains that
$$M=\tilde T_1\#\dots\#\tilde T_n\# H_1\#\dots\# H_k$$
where the $\tilde T_j=\T^3|\theta_j$ are tori quotiented by
involutions $\theta_j$ (possibly the identity), and the $H_i$ are
handles.

\section{An example of a quasi-Anosov diffeomorphism with a basic set that is a periodic orbit}
Both the example by Franks-Robinson \cite{frob}, and the example
by Medvedev-Zhuzhoma \cite{medvedevzhuzhoma} are quasi-Anosov
diffeomorphisms whose non-wandering set consists exclusively of
one codimension-one expanding attractor and one shrinking repeller.\par%
Let us construct an example of a quasi-Anosov diffeomorphism with a
basic set consisting of a periodic orbit.\par%
Let $A$ be a linear hyperbolic diffeomorphism of a 3-torus $T_1$
having at least one fixed point, and such that the stable
dimension is $1$. Make a deformation around a fixed point, in
order to obtain a DA-diffeomorphism with a repelling fixed point
and a codimension-one expanding attractor. See details in Section
\ref{section.thm1.2}. This new diffeomorphism $h$ preserves the
original stable foliation. Now, make a new deformation, also
preserving the original stable foliation, such that the repelling
fixed point turns into a saddle, and two repelling fixed points
appear on its stable manifold, locally
separated by the unstable manifold of the saddle point.\par%
Like in Franks-Robinson example, cut two 3-balls $B_2$ and $B_3$
containing, respectively, the repelling fixed points of $T_1$.
Now, take two 3-tori $T_2$ and $T_3$, and consider the dynamics of
$h^{-1}$ on each one of them. Cut two 3-balls $B'_2$ and $B'_3$,
each containing the attracting fixed points of $T_2$ and $T_3$
respectively. Glue carefully $T_1$ and $T_j$ along the boundary of
$B_j$ and $B'_j$, $j=2,3$, by means of a orientation reversing
homeomorphisms as described in section \S\ref{section thm1.1}. One
obtains an Axiom A diffeomorphism of a manifold which is the
connected sum of $T_1$, $T_2$ and $T_3$: $T_1\# T_2\# T_3$. The
non-wandering set of this diffeomorphism consists of one
codimension-one expanding attractor, two codimension-one shrinking
repellers, and a hyperbolic fixed point.  Proceeding as in
\cite{frob} one perturbs the diffeomorphism producing a twist in
the regions where the surgery was performed. Since this
perturbation is local, it does not affect the hyperbolic behavior
of the non-wandering set. In this way one obtains an Axiom A
diffeomorphism satisfying the quasi-transversality condition, and
with the above mentioned non-wandering set.

\section{Partially hyperbolic quasi-Anosov diffeomorphisms
\label{section.partially.hyperbolic}}
In this section, we study quasi-Anosov diffeomorphisms in the
presence of partial hyperbolicity. A diffeomorphism $f$ is called
{\de partially hyperbolic} if there exists an invariant splitting of
the tangent bundle $TM=E^s\oplus E^c\oplus E^u$ such that all unit
vectors $v^\sigma\in E_x^\sigma$, $\sigma=s,c,u$ satisfy:
$$|Tf_x v^s|<|Tf_x v^c|<|Tf_x v^u|\quad\mbox{and}\quad |Tf_x v^s|<1<|Tf_x v^u|$$
It is a known fact that there are unique invariant foliations
$\mathcal W^s$ and $\mathcal W^u$ that are everywhere tangent,
respectively, to $E^s$ and $E^u$ (see, for instance \cite{hps}).
However, $E^c$ is not integrable in general. It is an open problem
if it is integrable in the case $\dim E^c=1$. Here, we shall
consider an a priori mild hypothesis: either $E^{cs}$ or $E^{cu}$
integrates to a (codimension-one) foliation. \newline\par %
Let us prove Theorem \ref{ph.qad}. Assume that $E^{cu}$ integrates
to a foliation $\mathcal F$. We may suppose that the manifold $M$ is
not the 3-torus, for otherwise the result is immediate. Then, $M$ is
not {\de irreducible} (that is, there is an embedded sphere $S^2$
not bounding any $3$-dimensional ball). Indeed, irreducible
manifolds are prime, and Theorem \ref{thm.kneser.decomposition}
implies that $M$ is not prime unless it is the 3-torus. Now, Theorem
C.2., p 45 of Roussarie \cite{roussarie} implies that a
codimension-one foliation $\mathcal F$ of a manifold that is not
irreducible has a compact leaf. This compact leaf $T$ must be
homeomorphic to a 2-torus, since the strong unstable foliation has
no singularities and does not contain closed leaves.\par%
 Take $p\in M$ a periodic point in $M$, of period $k$, such that the stable
 manifold of $p$ hits $T$. This implies that the set of leaves
 $f^{kn}(T)$ ($f$ preserves the foliation) accumulates in $p$.
 Now, Haefliger \cite{haefliger} says that the set of points lying in a compact leaf is
 compact. In particular $p\in M$, accumulating point of compact
 leaves, must be in a compact leaf $T_0$. It follows that $T_0$ is $f^k$-invariant.
 Moreover, by the above argument, $T_0$ must be a 2-torus.
 Now, the local stable manifolds of $T_0$ form an open set $U$ satisfying
$f^k(\overline{U})\subset U$. This implies that $T_0$ is a basic set
that is an attractor, but this contradicts Theorem
\ref{teo.quasi.Anosov.dim3}.(2).

\end{document}